\documentclass[reqno]{amsart}

\usepackage{appendix}
\usepackage[utf8]{inputenc}
\usepackage[T1]{fontenc}
\usepackage{lmodern}
\usepackage{hyperref}
\usepackage{amsmath,bm,physics}
\usepackage{amsthm,amssymb}
\usepackage{amsfonts,mathrsfs}
\usepackage{epstopdf}
\usepackage[table]{xcolor}
\usepackage{enumitem}
\usepackage[numbers,sort&compress]{natbib}
\usepackage{graphicx,pstricks,listings,caption,tikz}
\usepackage{soul}
\usepackage{cleveref}
\usepackage {setspace}

\usepackage{geometry}
\allowdisplaybreaks[4]

\graphicspath{ {./my_report_images/} }

\sethlcolor{orange}

\numberwithin{equation}{section}
\newtheorem{corollary}{Corollary}[section]
\newtheorem{definition}{Definition}[section]
\newtheorem{lemma}{Lemma}[section]
\newtheorem{proposition}{Proposition}[section]
\newtheorem{remark}{Remark}[section]

\newtheorem{theorem}{Theorem}[section]

\begin{document}
\title{Periodic Dynamics of Transonic Shocks in Euler--Poisson Flows}

\author{Peng Qu}
\address[Peng Qu]{School of Mathematical Sciences, Fudan University, Shanghai, China and Shanghai Key Laboratory of Contemporary Applied Mathematics, Shanghai, China}
\email{pqu@fudan.edu.cn}

\author{Jiahui Wang}
\address[Jiahui Wang]{School of Mathematical Sciences, Fudan University, Shanghai, China}
\email[corresponding author]{23110180037@m.fudan.edu.cn}

\date{}

\keywords{Isentropic Euler--Poisson equations, Global existence, Dynamical stability, Transonic shocks, Time periodic solution}
\subjclass[2020]{35B10, 35L50, 35L67, 35Q31, 76H05}

\begin{abstract}
	This paper studies the existence and dynamical stability of time--periodic transonic shock solutions to the one--dimensional Euler--Poisson equations under periodic boundary perturbations. The analysis relies on two key mechanisms: the structural dissipation induced by the transonic shock and the stabilization effect of the positive electric field on the shock position. Using a reformulation of the Rankine--Hugoniot conditions and uniform estimates on the flow and shock perturbations, we establish the existence of a time-periodic transonic shock solution. We further prove its dynamical stability under small perturbations of the initial data.
\end{abstract}
\maketitle

\section{Introduction}
We study the following one--dimensional Euler--Poisson equations, which describe charged compressible fluids arising naturally in hydrodynamic descriptions of semiconductor devices and plasmas (\cite{Markowich1991}):
\begin{equation}\label{a1}
		\begin{split}
			\rho_{t}+(\rho u)_{x}&=0,\\
			(\rho u)_{t}+(\rho u^{2}+p)_{x}&=\kappa \rho E,\\
			E_{x} &= \rho - b_{0},
		\end{split}
\end{equation}
where $\rho, u, E$ are the unknowns on $(t,x) \in \mathbb{R}_{+}\times[0,L]$, denoting the density, velocity and the electric field respectively, while $b_{0} > 0$ stands for the positive background charge. We introduce the parameter $\kappa >0$ to measure the strength of the electrostatic force. Similar coupling parameters have also been used in \cite{TadmorWei2008,WeiTadmorBae2012}. We assume that the pressure $p=p(\rho)$ satisfies
\begin{equation*}
	p(0) = p^{\prime}(0) = 0, \qquad p^{\prime}(\rho)>0, ~~p^{\prime\prime} \ge 0,~~\text{for}~\rho > 0, \qquad p(+\infty) = +\infty.
\end{equation*}
A common example is the polytropic gas, for which $p(\rho) = A\rho^{\gamma}$ with $A>0, \gamma > 1$.
\par We consider the initial boundary value problem of \eqref{a1} with the initial data
\begin{align}
	(\rho,u,E)(t,x)\big|_{t=0}&=(\rho_{0},u_{0},E_{0})(x),\label{a3}
\end{align}
and the boundary conditions
\begin{align}
	(\rho,u,E)(t,x)\big|_{x=0}&=(\rho_{b-},u_{b-},E_{b-})(t), \quad \rho(t,x)\big|_{x=L}=\rho_{b+}(t),\label{a5}
\end{align}
satisfying
\begin{align*}
	E_{0}(x) = E_{b-}(0)+ \int_{0}^{x}\left(\rho_{0}(y) - b_{0}\right)dy.
\end{align*}
\par A transonic shock solution for problem \eqref{a1}--\eqref{a5} is defined as follows.
\begin{definition}\label{D1}
	A piecewise $C^1$ solution to~\eqref{a1} is said to be a {\rm{transonic shock solution}} if it is separated by a shock wave positioned at $x=\sigma(t)$, mediating a transition from supersonic upstream flow (left of the shock) to subsonic downstream flow (right of the shock), i.e., it is of the form
	\begin{align*}
		(\rho,u,E)(t,x)=
		\left\{\begin{aligned}
			&(\rho_{-},u_{-},E_{-})(t,x),\quad 0\leq x<\sigma(t),\\
			&(\rho_{+},u_{+},E_{+})(t,x),\quad \sigma(t)<x\leq L,
		\end{aligned}\right.
	\end{align*}
	satisfying
	\begin{equation}
		\left(u - \sqrt{p^{\prime}(\rho)}\right)(t,\sigma(t)-) > 0 > \left(u - \sqrt{p^{\prime}(\rho)}\right)(t,\sigma(t)+),
	\end{equation}
	and the \textsl{Rankine--Hugoniot} conditions at the shock front $x = \sigma(t)$ as
	\begin{align}
		\big[\rho u\big]=\sigma^{\prime}(t)\big[\rho\big], \qquad \big[\rho u^{2} + p\big]=\sigma^{\prime}(t)\big[\rho u\big], \qquad \big[E\big] = 0, \label{hr3}
	\end{align}
	where $[f] := f(t,\sigma(t)+) - f(t,\sigma(t)-)$ denotes the jump across the shock, and the Lax geometric entropy condition
	\begin{align}
		\left(u - \sqrt{p^{\prime}(\rho)}\right)(t,\sigma(t)-) &> \sigma^{\prime}(t) > \left(u - \sqrt{p^{\prime}(\rho)}\right)(t,\sigma(t)+), \label{Lax1}\\
		\left(u + \sqrt{p^{\prime}(\rho)}\right)(t,\sigma(t)+) &> \sigma^{\prime}(t). \label{Lax2}
	\end{align}
\end{definition}
\par We choose a class of steady transonic shock solutions as the background solution. The system for steady solutions becomes \footnote{Applying rescaling $\hat{E} = \sqrt{\kappa}\tilde{E},~~\hat{x} = \sqrt{\kappa}x$, the system of $(\rho, u, \hat{E})$ becomes the standard Euler--Poisson system ($\kappa=1$).} 
\begin{align}\label{a6}
		\begin{split}
			\left(\tilde{\rho}\tilde{u}\right)_{x}&=0,\\
			\left(\tilde{\rho}\tilde{u}^{2}+p(\tilde{\rho})\right)_{x}&= \kappa\tilde{\rho}\tilde{E},\\
			\tilde{E}_{x} &= \tilde{\rho} - b_{0},
		\end{split}
\end{align}
with the boundary data
\begin{align}
	(\tilde{\rho},\tilde{u},\tilde{E})(x)\big|_{x=0}=(\tilde{\rho}_{b-},\tilde{u}_{b-},\tilde{E}_{b-}),\qquad \tilde{\rho}(x)\big|_{x=L}=\tilde{\rho}_{b+}.\label{a7}
\end{align}
The steady Euler--Poisson flows have been studied in many works. Subsonic and supersonic solutions were obtained in \cite{DegondMarkowich1990,DegondMarkowich1993,Markowich1991,PengViolet2006}, while the transonic ones in \cite{AscherEtAl1991}. Gamba \cite{Gamba1992} constructed stationary transonic solutions by a vanishing--viscosity method, and Rosini \cite{Rosini2005} analyzed the phase--plane for all kinds of steady solutions. Later, Luo and Xin \cite{LuoXin2012} studied the existence, uniqueness, and multiplicity of one--dimensional transonic shocks for different boundary data and interval lengths. The stability of steady transonic shocks was also studied. Rosini \cite{Rosini2004} proved local-in-time stability for strong transonic shocks. Luo et al. \cite{LuoRauchXieXin2011} proved structural and dynamical stability and showed the importance of the electric field at the shock for the stability of its position.
\par We define the sonic state $\tilde{\rho} = \tilde{\rho}_{s}$ as the unique positive solution for the equation $p^{\prime}(\tilde{\rho}) = \tilde{\rho}_{b-}^{2}\tilde{u}_{b-}^{2}/\tilde{\rho}^{2}$. In the present paper, we focus on the case when the background charge satisfies $0<b_{0}<\tilde{\rho_{s}}$.  In this case, if the boundary data $(\tilde{\rho}_{b-},\tilde{u}_{b-},\tilde{E}_{b-}), \tilde{\rho}_{b+}$ and the interval length $L$ satisfy some compatibility criteria, then the problem \eqref{a6}--\eqref{a7} admits a unique transonic shock solution with the shock located at $x = \tilde{x}$ (see \cite{LuoXin2012}). We further assume
\begin{equation}\label{restore}
	\tilde{E}(\tilde{x}) > 0,
\end{equation}
which is essential for the stabilization mechanism of the shock position.
\par Moreover, for some $\delta>0$, the supersonic branch can be extended to $[0,\tilde{x}+\delta]$ via local extension of~\eqref{a6} and \eqref{a7}, while the subsonic branch similarly extends to $[\tilde{x}-\delta,L]$. Throughout this paper, the notation $(\tilde{\rho}_{\pm},\tilde{u}_{\pm},\tilde{E}_{\pm})(x)$ represents these extended solutions.
\par We would like to analyze the time--varying shock dynamics. Specifically, we consider the periodic boundary data of the form 
\begin{align}
\rho_{b-}(t)=\tilde{\rho}_{b-}+\overline{\rho}_{b-}(t),~~~u_{b-}(t)=\tilde{u}_{b-}+\overline{u}_{b-}(t),~~~ E_{b-}(t) \equiv \tilde{E}_{b-}, ~~~ \rho_{b+}(t)=\tilde{\rho}_{b+}+\overline{\rho}_{b+}(t),\label{a8}
\end{align}
which is a $C^{1}$ perturbation of \eqref{a7}
\begin{align}
	\|\overline{\rho}_{b-}(t)\|_{C^{1}(\mathbb{R}_{+})}+\|\overline{u}_{b-}(t)\|_{C^{1}(\mathbb{R}_{+})}+\|\overline{\rho}_{b+}(t)\|_{C^{1}(\mathbb{R}_{+})}\leq\epsilon, \label{a13}
\end{align}
for some small constant $\epsilon>0$. We further assume that the boundary states share the same period $\mathcal{T}>0$, i.e.,
\begin{align}
\overline{\rho}_{b-}(t+\mathcal{T})=\overline{\rho}_{b-}(t),~~\overline{u}_{b-}(t+\mathcal{T})=\overline{u}_{b-}(t),
~~\overline{\rho}_{b+}(t+\mathcal{T})=\overline{\rho}_{b+}(t).\label{a10}
\end{align}
\par Time--periodic solutions under periodic boundary data have been studied for several compressible flow models. Matsumura and Nishida \cite{Matsumura} studied periodic solutions for viscous gas equations, and Luo \cite{Luo} considered the piston problem for polytropic gases. For inviscid flows, Yuan \cite{Yuan} constructed time--periodic supersonic solutions to the one--dimensional isentropic Euler equations. Qu \cite{Qu} studied the existence and stability of time-periodic solutions for quasilinear hyperbolic systems with dissipative boundary conditions. For further discussions, one can refer to \cite{Yuw, Yuh, Qup1, ZHANG}. In our problem, the solution contains a moving transonic shock. Thus, the shock position must be determined together with the flow and the electric field.
\par We focus on the existence and stability of time--periodic transonic shock solutions to \eqref{a1}. We further assume that
\begin{align}
0<\kappa \ll 1 ,\label{a11}\\
1 + \mathcal{K} < \dfrac{\tilde{u}_{b-}}{\sqrt{p^{\prime}(\tilde{\rho}_{b-})}} < M_{crit} -\mathcal{K}, \label{AA11}
\end{align}
for some constant $\mathcal{K}>0$ depending on $\kappa$ and the constant $M_{crit}$ determined later in \eqref{left}.
\par The main theorems of this paper are presented as follows. First, we prove the existence of time--periodic transonic shock solutions.
\begin{theorem}[Existence of time-periodic transonic shock solutions]\label{t1}
With supersonic boundary data $\rho_{b-}, u_{b-}$ and $\rho_{b+}$ as \eqref{a8}, under the assumptions \eqref{restore} and \eqref{a11}--\eqref{AA11}, there exist a small constant $\epsilon_1 >0$ and a constant $C_{E} > 0$ such that
for any given $\epsilon\in(0,\epsilon_1)$, and any given boundary perturbations $\overline{\rho}_{b-},\overline{u}_{b-},\overline{\rho}_{b+}\in C^1_{t}$ satisfying \eqref{a13}--\eqref{a10}, there exists an initial datum $(\rho^{(\mathcal{T})}_{0},u^{(\mathcal{T})}_{0},E^{(\mathcal{T})}_{0})(x)$ of the form 
\begin{align}\label{a14}
	(\rho^{(\mathcal{T})}_{0},u^{(\mathcal{T})}_{0},E^{(\mathcal{T})}_{0})(x)=
	\left\{\begin{aligned}
		&(\rho_{0-}^{(\mathcal{T})},u_{0-}^{(\mathcal{T})},E_{0-}^{(\mathcal{T})})(x),\quad 0\leq x<\sigma^{(\mathcal{T})}(0),\\
		&(\rho_{0+}^{(\mathcal{T})},u_{0+}^{(\mathcal{T})},E_{0+}^{(\mathcal{T})})(x),\quad \sigma^{(\mathcal{T})}(0)<x\leq L,
	\end{aligned}\right.
\end{align}
satisfying
\begin{multline}
	|\sigma^{(\mathcal{T})}(0)-\tilde{x}|+\|(\rho_{0-}^{(\mathcal{T})},u_{0-}^{(\mathcal{T})},E_{0-}^{(\mathcal{T})})
	-(\tilde{\rho}_{-},\tilde{u}_{-},\tilde{E}_{-})\|_{C^{1}([0,\sigma^{(\mathcal{T})}(0)])}\\
	+\|(\rho_{0+}^{(\mathcal{T})},u_{0+}^{(\mathcal{T})},E_{0+}^{(\mathcal{T})})-(\tilde{\rho}_{+},\tilde{u}_{+},
	\tilde{E}_{+})\|_{C^{1}([\sigma^{(\mathcal{T})}(0),L])}\leq C_{E}\epsilon,\label{a15}
\end{multline}
such that the initial--boundary value problem~\eqref{a1}--\eqref{a5} admits a transonic shock solution $(\rho^{(\mathcal{T})},u^{(\mathcal{T})},E^{(\mathcal{T})})(t,x)$ on $(t, x)\in \mathbb{R}_{+}\times[0,L]$ containing a shock $x=\sigma^{(\mathcal{T})}(t) \in (0,L)$ with time periodicity
\begin{align}\label{temPeriod}
	(\rho^{(\mathcal{T})},u^{(\mathcal{T})},E^{(\mathcal{T})})(t+\mathcal{T},x) = (\rho^{(\mathcal{T})},u^{(\mathcal{T})},E^{(\mathcal{T})})(t,x), \quad \sigma^{(\mathcal{T})}(t+\mathcal{T})=\sigma^{(\mathcal{T})}(t).
\end{align}
\par Moreover, if the supersonic and subsonic regions are denoted by 
\begin{equation}\label{regions}
	\Omega_{-}^{\mathcal{T}} = \left\{(t,x):t\in \mathbb{R}_{+}, 0\le x \le\sigma^{(\mathcal{T})}(t) \right\}, \quad \Omega_{+}^{\mathcal{T}} = \left\{(t,x):t\in \mathbb{R}_{+}, \sigma^{(\mathcal{T})}(t)\le x \le L \right\},
\end{equation}
the transonic shock solution satisfies
\begin{multline}
\sum_{m=0}^{1}|\partial_{t}^{m}(\sigma^{(\mathcal{T})}(t)-\tilde{x})|+\|(\rho_{-}^{(\mathcal{T})},u_{-}^{(\mathcal{T})},
E_{-}^{(\mathcal{T})})
-(\tilde{\rho}_{-},\tilde{u}_{-},\tilde{E}_{-})\|_{C^{1}(\Omega_{-}^{\mathcal{T}})}\\
+\|(\rho_{+}^{(\mathcal{T})},u_{+}^{(\mathcal{T})},E_{+}^{(\mathcal{T})})-(\tilde{\rho}_{+},\tilde{u}_{+},
\tilde{E}_{+})\|_{C^{1}(\Omega_{+}^{\mathcal{T}})}<C_{E}\epsilon,\label{a16}
\end{multline}
where $(\rho_{\pm}^{(\mathcal{T})},u_{\pm}^{(\mathcal{T})},E_{\pm}^{(\mathcal{T})})$ are the solution in the corresponding regions. 
\end{theorem}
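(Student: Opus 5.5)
Our plan is to construct the periodic solution as a fixed point of the period map, using the contraction supplied by the structural dissipation at the shock and the restoring effect of the positive electric field \eqref{restore}. We first reduce the problem to a free boundary problem on fixed domains. We introduce a change of variables that flattens the shock, e.g. $y = \tilde{x} + (x-\sigma(t))\frac{L-\tilde x}{L-\sigma(t)}$ on the subsonic side and the analogous map on the supersonic side. We then write the Euler--Poisson system in Riemann-invariant (characteristic) form for the perturbations $(\rho-\tilde\rho,u-\tilde u)$ on each side. The electric field is not an independent unknown: $E$ is recovered by integrating $E_x=\rho-b_0$ from $x=0$ with $E(t,0)=\tilde{E}_{b-}$, and the continuity $[E]=0$ is automatic.

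The supersonic region $0\le x\le\sigma(t)$ is determined entirely by the boundary data at $x=0$, since both characteristics enter from the left. Standard semi-global $C^1$ theory for quasilinear hyperbolic systems with zeroth-order source, together with the smallness $\kappa\ll1$ and the bound \eqref{AA11} that keeps the flow uniformly supersonic on $[0,\tilde x+\delta]$, gives a unique solution with $C^1$ norm $\le C\epsilon$. After an initial layer of length at most $(\tilde x+\delta)/\min(u-c)$, this solution is $\mathcal{T}$-periodic, because it depends only on the periodic data. This holds for any shock curve with $|\sigma-\tilde x|<\delta$.

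In the subsonic region we have one incoming characteristic at $x=L$ and one at the shock. The Rankine--Hugoniot conditions \eqref{hr3} are reformulated as follows. The first two relations, linearized around the background, express the incoming downstream Riemann invariant at the shock as a function of the outgoing one and the upstream state. They also yield a shock equation of the form
\begin{align*}
\sigma'(t) = -\alpha\,(\sigma(t)-\tilde x) + \beta\, w_-(t,\sigma(t)-) + \gamma\, w_+(t,\sigma(t)+) + \text{h.o.t.},
\end{align*}
where $\alpha>0$ is proportional to $\kappa\tilde E(\tilde x)$ times the jump of $\tilde\rho$. This is where \eqref{restore} enters: the background jump in momentum flux produced by displacing the shock in the field $\tilde E>0$ pulls the shock back toward $\tilde x$.

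The structural dissipation we need is that the reflection coefficient of the downstream characteristic at the shock, multiplied by the reflection coefficient at $x=L$ (where $\rho$ is prescribed), is strictly less than one in modulus. This is where the upper bound $M_{crit}$ in \eqref{AA11} is used. Combined with $\kappa$ small, so that the source terms only weakly couple the invariants, the linearized subsonic problem is then exponentially stable.

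With these ingredients, we establish uniform a priori estimates: by a continuity/bootstrap argument along characteristics, the $C^1$ norms of the perturbation and $|\sigma-\tilde x|+|\sigma'|$ stay $\le C_E\epsilon$ for all $t\ge0$ whenever the initial data are $O(\epsilon)$-close to the background. This yields global existence of the transonic shock solution. Next we prove a contraction estimate in $C^0$: for two such solutions, the difference of the $C^0$ norms plus the difference of shock positions decays like $Ce^{-\mu t}$. This uses the same dissipative reflection product and the damping $\alpha>0$. Consider the period map $\mathcal{P}$ sending the data at $t=0$ to the data at $t=\mathcal{T}$. We take a large multiple $N\mathcal{T}$ so that $\mathcal{P}^N$ is a contraction in $C^0$ on the $C^1$-bounded set, which is closed in $C^0$. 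Passing to the limit $t_k=k\mathcal{T}$ of a single global solution then gives a $C^0$-Cauchy sequence whose limit, with the uniform $C^1$ bound (Lipschitz limit, then continuity of derivatives via the characteristic equations for derivatives), is the desired initial datum with \eqref{a15}. The resulting solution satisfies \eqref{temPeriod} by uniqueness and \eqref{a16} by the uniform estimates.

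The main obstacle is the contraction step. Comparing two solutions with different shocks requires working in the flattened coordinates. We must also show that the coupling among the shock position, the electric field (which is nonlocal in $x$ through the integral of $\rho$), and the reflected invariant does not destroy the decay. We first try to build a weighted $C^0$ norm: a weight on the downstream invariants and a weight $K$ on $|\sigma_1-\sigma_2|$. We choose them so that the damping $\alpha$ absorbs the $O(\kappa)$ nonlocal terms and the reflection product below one handles the wave terms. This is exactly the role of choosing $\mathcal{K}$ depending on $\kappa$.
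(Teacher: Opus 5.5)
Your architecture (uniform global bounds for the initial--boundary value problem, a $C^{0}$ contraction between two solutions, a fixed point of the period map) differs from the paper's proof of Theorem~\ref{t1}. The paper never solves the initial--boundary value problem there. It iterates directly in the class of $\mathcal{T}$-periodic functions on $t\in\mathbb{R}$: at each step it solves a linearized problem for $\breve{\bm{\mathcal{R}}}^{(k)}$, then the periodic shock ODE via Lemmas~\ref{lemmaODE1}--\ref{lemmaODE2}. Your route is viable in principle (it is close to how the paper treats Theorem~\ref{t2}). The problem is that the dissipation mechanism your contraction rests on is misidentified.

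The shock condition does not give the incoming invariant from the outgoing one and the upstream state alone. Because perturbations are measured from the $x$-dependent background, it also depends on the shock position: $\overline{\mathcal{R}}_{2}=\mathscr{A}(\sigma,\overline{\mathcal{R}}_{1},\overline{\rho}_{l},\overline{u}_{l})$, with $\partial_{x}\mathscr{A}=O(\kappa)$ given by \eqref{pA3px}. Meanwhile the restoring rate is only $\alpha=|\partial_{x}\mathcal{F}|=\kappa\tilde{E}/(2\tilde{u}_{r})$, while $\partial_{\overline{\mathcal{R}}_{i}}\mathcal{F}=O(1)$. So the shock displacement is of order $\kappa^{-1}$ times the wave amplitude, and the loop wave $\to$ displacement $\to$ reflected wave has gain
\[
|\partial_{x}\mathscr{A}|\,|\partial_{x}\mathcal{F}|^{-1}\big(|\partial_{\overline{\mathcal{R}}_{1}}\mathcal{F}|+|\partial_{\overline{\mathcal{R}}_{2}}\mathcal{F}|\big)=2q,\qquad q:=\frac{1-\tilde{M}_{r}}{1+\tilde{M}_{r}}.
\]
This gain is independent of $\kappa$ and of the size of $\tilde{E}(\tilde{x})$, so taking $\kappa$ small does not suppress it. The condition actually needed is $\tilde{\theta}_{d}=q^{2}+2q<1$, i.e.\ $\sqrt{2}-1<\tilde{M}_{r}<1$ (see \eqref{dissipationCon}--\eqref{right}). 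This resonance term is what produces $M_{crit}$. The quantity you single out, the shock reflection coefficient $q^{2}$ times the unit reflection at $x=L$, is $<1$ for every transonic shock, so the upper Mach bound cannot enter there.

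Weights do not get around this. Let $W$ be the wave difference, $S=|\sigma_{1}-\sigma_{2}|$, $a=|\partial_{x}\mathscr{A}|$ and $\gamma=\sum_{i}|\partial_{\overline{\mathcal{R}}_{i}}\mathcal{F}|$, and use the natural bounds $W_{\mathrm{new}}\le q^{2}W+aS$ and $S_{\mathrm{new}}\le e^{-\alpha T}S+\gamma\alpha^{-1}(1-e^{-\alpha T})W$. Contraction of $W+KS$ forces $K(1-e^{-\alpha T})>a$ in the shock component. The wave component then has coefficient larger than $q^{2}+a\gamma/\alpha=\tilde{\theta}_{d}$. An $O(\kappa)$ effect is never absorbed automatically by an $O(\kappa)$ damping. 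You have to compute $\tilde{\theta}_{d}$ and build the estimate on it, as in \eqref{c36}--\eqref{C0estiFor3} and \eqref{cauchy}.

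The same counting applies to the nonlocal field you mention. Displacing the shock shifts $E$ downstream by $-[\tilde{\rho}](\sigma-\tilde{x})$, which is again $O(\kappa)\cdot O(\kappa^{-1})$ relative to the waves. It must also be tracked quantitatively rather than absorbed. The constant $\mathcal{K}$ plays no such role: it only absorbs the $O(\kappa)$ drift of the background Mach number between $x=0$ and $\tilde{x}$, so that \eqref{AA11} implies \eqref{left}.

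Finally, a $C^{1}$-ball is not closed in $C^{0}$. The limit of $U(k\mathcal{T})$ is a priori only Lipschitz. To get a classical periodic solution you need a modulus-of-continuity bound on the derivatives that is uniform in time, the analogue of \eqref{modulus}. Its shock-boundary part again uses the dissipative structure, cf.\ \eqref{modulusDissipation}.
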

Second, we investigate the dynamical stability of time--periodic transonic shock solutions under small initial perturbations.
\begin{theorem}[Stability of the time-periodic transonic shock solution]\label{t2}
With supersonic boundary data $\rho_{b-}, u_{b-}$ and $\rho_{b+}$ as \eqref{a8}, under the assumptions \eqref{restore} and \eqref{a11}--\eqref{AA11}, there exists a constant $\epsilon_2\in(0,\epsilon_1)$ such that for any given $\epsilon\in(0,\epsilon_2)$, any given boundary perturbations  $\overline{\rho}_{b-},\overline{u}_{b-},\overline{\rho}_{b+}\in C^1$ satisfying \eqref{a13}--\eqref{a10}, and any admissible initial data satisfying the standard $C^{1}$ compatibility conditions
\begin{align}\label{a19}
(\rho_{0},u_{0},E_{0})(x)=
\left\{\begin{aligned}
&(\rho_{0-},u_{0-},E_{0-})(x),\quad 0\leq x<x_{0},\\
&(\rho_{0+},u_{0+},E_{0+})(x),\quad x_{0}<x\leq L,
\end{aligned}\right.
\end{align}
with
\begin{multline}
|x_{0}-\tilde{x}|+\|(\rho_{0-},u_{0-},E_{0-})-(\tilde{\rho}_{-},\tilde{u}_{-},\tilde{E}_{-})\|_{C^{1}([0,x_0])}\\
+\|(\rho_{0+},u_{0+},E_{0+})-(\tilde{\rho}_{+},\tilde{u}_{+},\tilde{E}_{+})\|_{C^{1}([x_0,L])}\leq\epsilon,\label{a20}
\end{multline}
the initial--boundary value problem \eqref{a1}--\eqref{a5} admits a transonic shock solution, which includes the shock wave $x=\sigma(t)$ with $\sigma(0)=x_0,~ \sigma(t)\in(0,L)$. 
\par Moreover, this transonic shock solution exponentially converges to the time--periodic solution driven by the same boundary condition, i.e., there exist constants $C_S>0$, $\zeta\in(0,1)$ and $\mathcal{T}_{0}>0$, such that for all $t>0$,
\begin{align}
&\|(\rho_{-},u_{-},E_{-})(t,\cdot)-(\rho_{-}^{(\mathcal{T})},u_{-}^{(\mathcal{T})},E_{-}^{(\mathcal{T})})(t,\cdot)\|
_{C^{0}([0,\min\{\sigma(t),\sigma^{(\mathcal{T})}(t)\}))}\notag\\
&+\|(\rho_{+},u_{+},E_{+})(t,\cdot)-(\rho_{+}^{(\mathcal{T})},u_{+}^{(\mathcal{T})},E_{+}^{(\mathcal{T})})(t,\cdot)\|
_{C^{0}((\max\{\sigma(t),\sigma^{(\mathcal{T})}(t)\},L])}\notag\\
&+\Big|\sigma(t)-\sigma^{(\mathcal{T})}(t)\Big|+\Big|\partial_{t}\sigma(t)-\partial_{t}\sigma^{(\mathcal{T})}(t)\Big|\leq C_{S}\epsilon\zeta^{\lfloor\frac{t}{\mathcal{T}_{0}}\rfloor},\label{a21}
\end{align}
where $(\rho_{\pm},u_{\pm},E_{\pm})$ and $(\rho_{\pm}^{(\mathcal{T})},u_{\pm}^{(\mathcal{T})},E_{\pm}^{(\mathcal{T})})$ are the solutions in the corresponding regions with initial data \eqref{a19}--\eqref{a20} and \eqref{a14}--\eqref{a15}, respectively.
\end{theorem}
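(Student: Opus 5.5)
The plan is to prove Theorem~\ref{t2} in two stages: first global existence of the piecewise $C^1$ transonic shock solution with uniform smallness, then a contraction estimate for the difference between it and the periodic solution of Theorem~\ref{t1} over one time step $\mathcal{T}_0$.

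\textbf{Stage 1: global existence.} Rewrite \eqref{a1} in Riemann invariants $w_\pm = u \pm \int^{\rho}\sqrt{p'(s)}/s\,ds$ on each side of the shock, with $E$ recovered from the Poisson equation.
\begin{itemize}
\item In the supersonic region both characteristic families enter from $x=0$. The solution there is a perturbation of $\tilde{\rho}_-,\tilde u_-,\tilde E_-$ determined by the boundary data on $[0,\tilde x+\delta]$. This is a Cauchy problem in the $x$-direction, whose $C^1$ norm is controlled by $C\epsilon$ on a finite interval, independently of $t$.
\item In the subsonic region the data are the right boundary condition $\rho=\rho_{b+}$ and the Rankine--Hugoniot conditions \eqref{hr3} at $x=\sigma(t)$.
\item Following the reformulation of \eqref{hr3} mentioned in the abstract, solve the RH relations for the outgoing downstream invariant and for $\sigma'(t)$ in terms of the incoming invariant $w_-$ at $\sigma(t)+$ and the upstream state. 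The linearized shock equation should have the form
\[
\sigma'(t) = -\alpha\,\kappa\tilde E(\tilde x)\,(\sigma-\tilde x) + \beta\, w_-(t,\sigma+) + \text{(upstream terms)} + \text{h.o.t.},
\]
with $\alpha>0$. The restoring effect is what \eqref{restore} gives.
\item The boundary reflection at $x=L$ together with the shock reflection gives a reflection coefficient, and its modulus should be $<1$ under \eqref{AA11} and $\kappa\ll1$ by \eqref{a11}. This is the structural dissipation of the shock, and $M_{crit}$ is defined by this requirement.
\item The source terms $\kappa\rho E$ are $O(\kappa)$, so they are absorbed.
\end{itemize}
A standard continuity (bootstrap) argument, combined with local existence for free boundary problems of quasilinear hyperbolic systems (Li--Yu type), then gives $\|\cdot\|_{C^1}\le C\epsilon$ for all $t$. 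This uses the same uniform estimates as in the proof of Theorem~\ref{t1}.

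\textbf{Stage 2: exponential convergence.}
\begin{itemize}
\item Upstream: the difference vanishes identically after a finite time, because both solutions share the boundary data at $x=0$ and the supersonic characteristics exit the region $[0,\tilde x+\delta]$ in time $T_1$.
\item Downstream: set $\mathcal{T}_0$ to be the maximal time for a characteristic to bounce shock $\to L\to$ shock, plus $T_1$. Take the differences $\Delta w_\pm$, $\Delta\sigma$, comparing the two solutions along characteristics in the common region away from the two shocks. To handle the two different shock positions, extend each downstream state to $[\tilde x-\delta,L]$ and compare at a common point, at a cost of $O(\epsilon)|\Delta\sigma|$.
\item Linearize the RH map and the boundary reflection, integrate along characteristics, and apply Gronwall to the $O(\kappa+\epsilon)$ sources. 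This gives
\[
\sup_{[n\mathcal T_0,(n+1)\mathcal T_0]}\big(\|\Delta w\|_{C^0}+|\Delta\sigma|\big)\le \zeta\,\sup_{[(n-1)\mathcal T_0,n\mathcal T_0]}\big(\|\Delta w\|_{C^0}+|\Delta\sigma|\big),
\]
with $\zeta<1$.
\item $\zeta<1$ comes from combining three contractions: the reflection coefficient product below $1$, the damping of $\Delta\sigma$ through $e^{-\alpha\kappa\tilde E(\tilde x)\mathcal T_0}$, and the $O(\kappa)$ couplings. Iterating this inequality yields \eqref{a21}.
\item The bound on $\Delta\sigma'$ follows from the RH formula, since $\sigma'$ is a $C^1$ function of $C^0$ quantities.
\end{itemize}

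\textbf{Main obstacle.} The hardest step is closing the coupled contraction between $\Delta\sigma$ and $\Delta w$. The damping of $\Delta\sigma$ is only of order $\kappa\tilde E(\tilde x)$, hence weak, while $\Delta w$ feeds into $\sigma'$ with an $O(1)$ coefficient. I would handle this with a weighted norm $\|\Delta w\|+\mu|\Delta\sigma|$, choosing $\mu$ and $\mathcal T_0$ (which may be large, proportional to $1/\kappa$) so that the cross terms are dominated. The constraint $\tilde u_{b-}/\sqrt{p'(\tilde\rho_{b-})}<M_{crit}-\mathcal K$ in \eqref{AA11} is used exactly to make the reflection factor small enough for this to work.
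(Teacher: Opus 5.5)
\textbf{Verdict: there is a genuine gap.} Your overall architecture matches the paper's: a bootstrap over windows of length $\mathcal{T}_0$ along characteristics, a shock displacement damped only weakly by \eqref{restore} through $\partial_x\mathcal{F}=-\kappa\tilde E/(2\tilde u_r)$, and a weighted norm doing the job of the paper's separate bootstrap constants $C_{S,1},C_{S,2}$. Your round-trip window is a valid substitute for the paper's rescaling $\breve{\mathcal{R}}_2=\alpha_2\overline{\mathcal{R}}_2$, which makes the outlet strictly contractive so that one transit time suffices. Your remark that the upstream difference vanishes identically after finite time is correct, and sharper than \eqref{d0}.

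The gap is in the one step everything hinges on, and the justification you give there is wrong. The reflection product $\big(\frac{\tilde u_r-\tilde c_r}{\tilde u_r+\tilde c_r}\big)^2\cdot 1$ is $<1$ for \emph{every} transonic shock. So it is neither what \eqref{AA11} controls nor what defines $M_{crit}$. Nor are the $O(\kappa)$ couplings absorbable, and they do not contribute to the contraction. The term $\partial_x\mathscr{A}\,\Delta\sigma$ has an $O(\kappa)$ coefficient, but $\Delta\sigma$ is driven at $O(1)$ rate and damped only at $O(\kappa)$ rate. It can therefore reach size $\kappa^{-1}\|\Delta w\|$, so the loop wave $\to$ shock displacement $\to$ wave has $O(1)$ gain. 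This is the paper's resonance term.

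Concretely, write $D=|\partial_x\mathcal{F}|$, $K=|\partial_x\mathscr{A}|$, $S=\sum_i|\partial_{\overline{\mathcal{R}}_i}\mathcal{F}|$ and $r=\partial_{\overline{\mathcal{R}}_1}\mathscr{A}$. Over a window of length $h$, your norm $\|\Delta w\|+\mu|\Delta\sigma|$ picks up the coefficients
\[
r+\mu\,\frac{1-e^{-Dh}}{D}\,S \qquad\text{and}\qquad e^{-Dh}+\frac{K}{\mu}.
\]
A $\mu$ making both $<1$ exists if and only if $r+KS/D<1$. This is exactly $\tilde\theta_d<1$ in \eqref{dissipationCon}, and it is independent of $\mu$ and $h$. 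So neither tuning $\mu$ nor taking $\mathcal{T}_0\sim\kappa^{-1}$ can replace verifying it. A window of length $\kappa^{-1}$ would even make your Gronwall factor $e^{C(\kappa+\epsilon)\mathcal{T}_0}$ of size $e^{O(1)}$, eating the margin.

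The paper computes this inequality from \eqref{reflecPart}, \eqref{pA3px}, \eqref{pFpx} and \eqref{pFpU1}. It is equivalent to \eqref{right}, i.e.\ \eqref{left}, and that is precisely where \eqref{AA11} enters. Your proposal must supply this computation. If you do it with your own reduced shock equation, in which the outgoing invariant is eliminated through $\mathscr{A}$, the damping becomes $|\partial_x\hat{\mathcal{F}}|=\kappa\tilde E/(\tilde u_r+\tilde c_r)$. The loop gain becomes $2q^2+q$ with $q=\frac{\tilde c_r-\tilde u_r}{\tilde c_r+\tilde u_r}$, which is $<1$ iff $\tilde M_r>1/3$. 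Since \eqref{AA11} gives $\tilde M_r>\sqrt2-1>1/3$, your route closes once this step is actually carried out.
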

By taking $t \to +\infty$ in \eqref{a21}, the uniqueness result for the time--periodic transonic solution follows directly.
\begin{corollary}
	With boundary data $\rho_{b-}, u_{b-}, \rho_{b+}$ as \eqref{a8}, under the assumptions \eqref{restore} and \eqref{a11}--\eqref{AA11}, there exists a constant $\epsilon_3\in(0,\epsilon_2)$ such that for any given $\epsilon\in(0,\epsilon_3)$, any given boundary perturbations  $\overline{\rho}_{b-},\overline{u}_{b-}, \overline{\rho}_{b+}\in C^1$ satisfying \eqref{a13}--\eqref{a10}, the corresponding time--periodic solution $(\rho^{(\mathcal{T})},u^{(\mathcal{T})},E^{(\mathcal{T})})$ with the corresponding shock $\sigma^{(\mathcal{T})}$ is unique.
\end{corollary}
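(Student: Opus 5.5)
The uniqueness follows from Theorem~\ref{t2} by a comparison argument. Choose $\epsilon_3\in(0,\epsilon_2)$ small enough that $C_E\epsilon_3\le\epsilon_2$. Fix $\epsilon<\epsilon_3$ and boundary perturbations satisfying \eqref{a13}--\eqref{a10}. Suppose there are two time--periodic transonic shock solutions $(\rho^{(\mathcal{T})},u^{(\mathcal{T})},E^{(\mathcal{T})};\sigma^{(\mathcal{T})})$ and $(\hat\rho,\hat u,\hat E;\hat\sigma)$ driven by this boundary data, both in the small class described by \eqref{a15}--\eqref{a16}. Here the first is the one from Theorem~\ref{t1}, and the second is any competitor whose distance to the background, in the sense of \eqref{a16}, is at most $C_E\epsilon$.

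First we check that the second solution is admissible for Theorem~\ref{t2}. Its restriction to $t=0$ is piecewise $C^1$ with jump at $x_0=\hat\sigma(0)$. It satisfies the $C^1$ compatibility conditions, since it is a genuine $C^1$ piecewise solution up to $t=0$. By \eqref{a16} its distance to the background is at most $C_E\epsilon$.

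To fit the hypothesis \eqref{a20}, which is stated with bound $\epsilon$, we apply Theorem~\ref{t2} with the parameter $\epsilon' := C_E\epsilon<\epsilon_2$. This is legitimate because the boundary perturbations also satisfy \eqref{a13} with $\epsilon'\ge\epsilon$ (we may assume $C_E\ge1$). One point needs care here: the time--periodic reference solution in Theorem~\ref{t2} must be the same object for $\epsilon$ and $\epsilon'$. This holds because Theorem~\ref{t1} constructs it from the boundary data alone, and the bound \eqref{a16} with $\epsilon$ implies the bound with $\epsilon'$.

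Next we use uniqueness of the piecewise $C^1$ transonic shock solution of the initial--boundary value problem with given data. This is part of the local and global well-posedness underlying Theorem~\ref{t2}, and it identifies the solution produced by Theorem~\ref{t2} from the initial data $(\hat\rho,\hat u,\hat E)(0,\cdot)$ with $(\hat\rho,\hat u,\hat E;\hat\sigma)$ itself. Then \eqref{a21} gives, for every $t>0$,
\[
|\hat\sigma(t)-\sigma^{(\mathcal{T})}(t)|+\|(\hat\rho,\hat u,\hat E)(t,\cdot)-(\rho^{(\mathcal{T})},u^{(\mathcal{T})},E^{(\mathcal{T})})(t,\cdot)\|_{C^0}\le C_S\epsilon'\zeta^{\lfloor t/\mathcal{T}_0\rfloor},
\]
where the $C^0$ norms are taken on the common supersonic and subsonic regions as in \eqref{a21}.

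Finally we pass to the limit using periodicity. Fix any $t\ge0$ and evaluate the estimate at $t+k\mathcal{T}$ for $k\in\mathbb{N}$. By \eqref{temPeriod} for both solutions, the left-hand side at $t+k\mathcal{T}$ equals the left-hand side at $t$. The right-hand side tends to $0$ as $k\to\infty$. Hence $\hat\sigma(t)=\sigma^{(\mathcal{T})}(t)$, and the states agree on $[0,\sigma(t))$ and on $(\sigma(t),L]$, so the two solutions coincide.

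The main obstacle is the admissibility step, namely making sure the competitor lies within the perturbation class required by \eqref{a20}. This is why uniqueness is asserted only among time--periodic solutions in the small class \eqref{a16}, and why the threshold $\epsilon_3$ is chosen so that $C_E\epsilon_3\le\epsilon_2$.
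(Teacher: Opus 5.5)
Your proposal is correct and is the argument the paper gives in one line: letting $t\to+\infty$ in \eqref{a21}. You make explicit the details the paper leaves implicit: applying Theorem~\ref{t2} at level $C_E\epsilon$ with $C_E\epsilon_3\le\epsilon_2$, identifying the competitor with the solution of the initial--boundary value problem by uniqueness, and using $\mathcal{T}$--periodicity of both solutions to turn the exponential decay into equality, all restricted to the small class \eqref{a16}.
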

We provide several remarks regarding \Cref{t1,t2}.
\begin{remark}
	The compatibility conditions for the initial boundary value problems for hyperbolic equations have been discussed in \cite{Majda, Rauch1974}.
\end{remark}
\begin{remark}
	It can be seen in \Cref{s3} that \eqref{a16} can be strengthened to include the estimates for $|\partial_{t}^{2}\sigma|$ and $|\partial_{t}^{2}\sigma - \partial_{t}^{2}\sigma^{(\mathcal{T})}|$ respectively. The estimates of the electric field $E$ can be improved to $C^{2}$. In addition, analogous to \cite{Qu}, one can obtain the corresponding regularity result for the time--periodic solution as well as stabilization around it.
\end{remark}
\begin{remark}
	The results of \Cref{t1,t2} are also true if one imposes small perturbations for the inlet electric field, i.e. $E_{b-}(t)=\tilde{E}_{b-}+\overline{E}_{b-}(t)$ for some periodic $\overline{E}_{b-}$.
\end{remark}
\begin{remark}
	In \cite{LuoRauchXieXin2011}, the dynamical stability of a steady transonic shock solution is considered under small $H^{k+2}$ perturbations. Here, we prove the existence of time--periodic transonic shock solutions under time--periodic perturbations.
\end{remark}
\begin{remark}
	It is interesting to compare these results with previous works \cite{ZHANG,quWang2026}. For the isothermal quasi--one--dimensional Euler flow, it has the same dissipation condition \eqref{AA11} as the Euler--Poisson flow (see \cite{ZHANG}). For non--isentropic flows, a wider range of strong shocks is shown to be dissipative (see \cite{quWang2026}). To the best of the authors' knowledge, this is not due to technical reasons, but rather reveals an essential difference between isentropic flows and non--isentropic ones. 
\end{remark}
\par Besides the Euler--Poisson equations that we concern, transonic shocks have also been studied for quasi--one--dimensional nozzle flows; see \cite{Embid,Liu1,Rauch,XinZ}. Time--periodic transonic shocks under periodic boundary perturbations were recently studied in \cite{ZHANG, quWang2026}. In nozzle flows, the source terms come from the nozzle geometry. In the present problem, the source term is non--local and determined by the solution through the Poisson equation.
\par For multidimensional transonic shock problems, Xin and Yin studied the existence, uniqueness, and stability of transonic shocks in two-- and three--dimensional nozzles \cite{XinYin2005,Xin}. Li et al. further established the existence of multidimensional transonic shocks for the full Euler equations in general nozzles \cite{LiXinYin2010,LiXinYin2013}. More recently, Weng et al. proved the structural stability of three--dimensional axisymmetric transonic shocks under perturbations of the incoming flow, nozzle boundary, and exit pressure \cite{WengXieXin2021}. These works provide important results on multidimensional transonic shocks for the Euler equations.
\par The main difficulty in this analysis is that the shock position evolves as an unknown free boundary. The analysis relies on two stability mechanisms. First, the transonic shock provides an intrinsic structural dissipation for the downstream flow. Second, the positive electric field at the background shock stabilizes the shock position.
\par The proof is based on two key calculations at the shock. We first reformulate the Rankine--Hugoniot conditions in terms of the Riemann variables and calculate the corresponding dissipation coefficient, and then derive the ODE for the shock position and calculate its stability coefficient. The positive electric field at the shock \eqref{restore} gives the required sign of this coefficient and provides control of the shock displacement. Combining these with the weak electrostatic coupling condition \eqref{a11}, we construct the time--periodic transonic shock and establish its stability.
\par This paper is organized as follows: \Cref{s2} gives some useful preliminaries, including the existence of steady transonic shock solutions, the analysis of time--periodic solutions in the supersonic region, and the lemmas on the existence of periodic solutions for a kind of ODE. \Cref{sectionDissi} investigates the electric stabilization mechanism to the shock position and the shock boundary dissipation. Finally, \Cref{t1,t2} are proved in \Cref{s3,s4} respectively.

\section{Preliminaries}\label{s2}
In this section, we present three preliminary results.
\subsection{Basic analysis}\label{su2}
Now, we start with some formal basic analysis of transonic shock solutions for system \eqref{a1}. The sonic speed $c = c(\rho)$ and the Mach number $M$ are defined by
\begin{equation*}
	c(\rho)=\sqrt{p^{\prime}(\rho)}, \quad M=\frac{u}{c}.
\end{equation*}
\par It follows from the third equation in \eqref{a1} that
\begin{equation}\label{electri}
	E(t,x) = E_{b-} + \int_{0}^{x}\left(\rho(t,y) - b_{0}\right)dy.
\end{equation}
Later on, we mainly deal with the first two hyperbolic equations, and consider the electric field as an integral inhomogeneous term. Denote the dynamical unknowns by $U=\left(\rho, u\right)^{\top}$, the fixed background solution by $(\tilde{U}, \tilde{E}) = (\tilde{\rho}, \tilde{u}, \tilde{E})(x)$, the background shock position by $\tilde{x}$, and the perturbations by
\begin{equation*}
	\overline{\rho} = \rho - \tilde{\rho}, \quad \overline{u} = u - \tilde{u}, \quad \overline{E} = E - \tilde{E}.
\end{equation*} 
We choose the Riemann invariants $\mathcal{R} (U) = (\mathcal{R}_{1}, \mathcal{R}_{2})^{\top}(U)$ given by
\begin{equation}\label{newVaria}
	\mathcal{R}_{1} = u - \int_{0}^{\rho}
	\frac{\sqrt{p^{\prime}(s)}}{s}ds,\qquad \mathcal{R}_{2} = u + \int_{0}^{\rho}
	\frac{\sqrt{p^{\prime}(s)}}{s}ds,
\end{equation}
and the perturbation variables $\overline{\mathcal{R}}(x,U) = \left(\overline{\mathcal{R}}_{1}, \overline{\mathcal{R}}_{2}\right)^{\top}$ given by
\begin{equation}\label{upPertur}
	\overline{\mathcal{R}}_{1} = \mathcal{R}_{1}(U) - \tilde{\mathcal{R}}_{1}(\tilde{U}(x)), \qquad \overline{\mathcal{R}}_{2} = \mathcal{R}_{2}(U) - \tilde{\mathcal{R}}_{2}(\tilde{U}(x)).
\end{equation}
\par In order to carry out the variable substitution, we check the Jacobian
\begin{equation}\label{Jacobi}
	\nabla_{U} \overline{\mathcal{R}}\left(x,U\right) := \dfrac{\partial\left(\overline{\mathcal{R}}_{1}, \overline{\mathcal{R}}_{2}\right)}{\partial\left(\rho, u\right)} = 
	\begin{pmatrix}
		-\dfrac{\sqrt{p^{\prime}(\rho)}}{\rho} & 1\\[2ex]
		
		\dfrac{\sqrt{p^{\prime}(\rho)}}{\rho} & 1
	\end{pmatrix}
	,
\end{equation}
and its inverse matrix
\begin{equation}\label{inverVariable}
	\left(\nabla_{U} \overline{\mathcal{R}}\left(x,U\right)\right)^{-1} =\dfrac{\partial\left(\rho, u\right)}{\partial\left(\overline{\mathcal{R}}_{1}, \overline{\mathcal{R}}_{2}\right)} =
	\begin{pmatrix}
		-\dfrac{\rho}{2\sqrt{p^{\prime}(\rho)}} & \dfrac{\rho}{2\sqrt{p^{\prime}(\rho)}}\\[2ex]
		\dfrac{1}{2} & \dfrac{1}{2}
	\end{pmatrix}
	.
\end{equation}
Therefore, one can apply the variable substitution $U = U\left(x,\overline{\mathcal{R}}\right)$, and then get the equivalent system under new variables $\overline{\mathcal{R}}$:
\begin{equation}\label{equivSys}
	\begin{split}
		\partial_{t}\overline{\mathcal{R}}_{1} + \lambda_{1}\partial_{x}\overline{\mathcal{R}}_{1} = \kappa \overline{E} - \left(\lambda_{1}- \tilde{\lambda}_{1}\right)\partial_{x}\tilde{\mathcal{R}}_{1}, \\
		\partial_{t}\overline{\mathcal{R}}_{2} + \lambda_{2}\partial_{x}\overline{\mathcal{R}}_{2} = \kappa \overline{E} - \left(\lambda_{2}- \tilde{\lambda}_{2}\right)\partial_{x}\tilde{\mathcal{R}}_{2}, 
	\end{split}
\end{equation}
where the eigenvalues are
\begin{equation}
	\lambda_{1} = u - c(\rho), \quad \lambda_{2} = u+c(\rho),
\end{equation}
and
\begin{equation}\label{barEle}
	\overline{E}(t,x) = \int_{0}^{x}\left(\rho(t,y) - \tilde{\rho}(y)\right)dy.
\end{equation}
\par Next, we calculate the linear coefficients of the inhomogeneous terms, which dominates the growth strength of the wave along the characteristics. Combining \eqref{Jacobi}--\eqref{equivSys}, one can write the system in the form of wave decomposition
\begin{equation}\label{waveDe}
	\begin{split}
		\partial_{t}\overline{\mathcal{R}}_{1} + \lambda_{1}\partial_{x}\overline{\mathcal{R}}_{1} = \kappa \overline{E} + f_{11}\overline{\mathcal{R}}_{1} + f_{12}\overline{\mathcal{R}}_{2} + f_{1,NL}, \\
		\partial_{t}\overline{\mathcal{R}}_{2} + \lambda_{2}\partial_{x}\overline{\mathcal{R}}_{2} = \kappa \overline{E}+ f_{21}\overline{\mathcal{R}}_{1} + f_{22}\overline{\mathcal{R}}_{2} + f_{2,NL}, 
	\end{split}
\end{equation}
where 
\begin{equation}\label{linearPart}
	\nabla_{\overline{\mathcal{R}}}\bm{f}(x) := \left(f_{ij}(x)\right)_{2\times2} = -\dfrac{\kappa \tilde{E}}{2}
	\begin{pmatrix}
		 \dfrac{1}{\tilde{\lambda}_{1}}\left(1+\dfrac{\tilde{\rho}\tilde{c}^{\prime}(\tilde{\rho})}{\tilde{c}}\right) & \dfrac{1}{\tilde{\lambda}_{1}}\left(1-\dfrac{\tilde{\rho}\tilde{c}^{\prime}(\tilde{\rho})}{\tilde{c}}\right)\\[2ex]
		\dfrac{1}{\tilde{\lambda}_{2}}\left(1-\dfrac{\tilde{\rho}\tilde{c}^{\prime}(\tilde{\rho})}{\tilde{c}}\right) & \dfrac{1}{\tilde{\lambda}_{2}}\left(1+\dfrac{\tilde{\rho}\tilde{c}^{\prime}(\tilde{\rho})}{\tilde{c}}\right)
	\end{pmatrix}
	.
\end{equation}
and $f_{1,NL}, f_{2,NL}$ are nonlinear higher--order terms with respect to $\overline{\rho}, \overline{u}$. At this point, one can see the linear part may cause $\overline{\mathcal{R}}_{1}, \overline{\mathcal{R}}_{2}$ to grow exponentially. In order to curb this growth, we suppose that $\kappa \ll 1$ as in \eqref{a11}. Then one can see that
\begin{equation}
	\Vert \nabla_{\overline{\mathcal{R}}}\bm{f} \Vert < C_{g}\kappa.
\end{equation}
The constant $C_{g}$ depends only on the background solution. Meanwhile, 
\begin{equation}\label{Ex}
	\overline{E}_{x} = \dfrac{\tilde{\rho}}{2\tilde{c}}\left(\overline{\mathcal{R}}_{2} - \overline{\mathcal{R}}_{1}\right) + \text{nonlinear terms.}
\end{equation}
\par In the end, we rewrite boundary conditions under new variables $\overline{\mathcal{R}}$ as
\begin{align}
	\overline{\mathcal{R}}_{1}(t,0)=\overline{\mathcal{R}}_{1,b-}(t),\quad \overline{\mathcal{R}}_{2}(t,0)=\overline{\mathcal{R}}_{2,b-}(t),\label{inletBoundary}\\
	\overline{\mathcal{R}}_{1}(t,L)= \overline{\mathcal{R}}_{2}(t,L) + \overline{\mathcal{R}}_{1,b+}(t), \label{outletBoundary}
\end{align}
with some constant $C_{b}>0$ such that
\begin{align}\label{boundarySmall}
	\|\overline{\mathcal{R}}_{1,b-}\|_{C^{1}(\mathbb{R}_{+})}+\|\overline{\mathcal{R}}_{2,b-}\|_{C^{1}(\mathbb{R}_{+})}+\|\overline{\mathcal{R}}_{1,b+}\|_{C^{1}(\mathbb{R}_{+})}\leq C_{b}\epsilon,
\end{align}
and $\mathcal{T}$--periodicity
\begin{align}
	\overline{\mathcal{R}}_{i,b-}(t+\mathcal{T})=\overline{\mathcal{R}}_{i,b-}(t), \quad i = 1,2; \quad \overline{\mathcal{R}}_{1,b+}(t+\mathcal{T})=\overline{\mathcal{R}}_{1,b+}(t). \label{boundaryPeriodicity}
\end{align}

\subsection{Time--periodic solutions in the supersonic region}
We consider flows in the supersonic region, for which both eigenvalues are positive. The lemma below follows from the argument in \cite{Yuw, Yuan}.
\begin{lemma}\label{existSuper}
	There exists a constant $\epsilon_{0} > 0$ such that for any $\epsilon \in (0,\epsilon_{0})$ and any inlet condition $(\rho_{b-}, u_{b-}, E_{b-})$ of the form \eqref{a8} satisfying \eqref{a13} and \eqref{a10}, the boundary problem \eqref{a1} and \eqref{a8} admits a unique time--periodic solution $(\rho_{-}^{(\mathcal{T})},u_{-}^{(\mathcal{T})},E_{-}^{(\mathcal{T})})(t,x)$ on the extended region $(t,x) \in \mathbb{R}_{+}\times[0,\tilde{x}+\delta]$ for some $\delta>0$ satisfying
	\begin{align}
		&\rho_{-}^{(\mathcal{T})}(t+\mathcal{T},x)=\rho_{-}^{(\mathcal{T})}(t,x), ~~ u_{-}^{(\mathcal{T})}(t+\mathcal{T},x)=u_{-}^{(\mathcal{T})}(t,x),
		~~ E_{-}^{(\mathcal{T})}(t+\mathcal{T},x)=E_{-}^{(\mathcal{T})}(t,x),\label{c1}\\
		&\|\rho_{-}^{(\mathcal{T})}(t,x)-\tilde{\rho}_{-}(x)\|_{C^{1}}+\|u_{-}^{(\mathcal{T})}(t,x)-\tilde{u}_{-}(x)\|_{C^{1}}
		+\|E_{-}^{(\mathcal{T})}(t,x)-\tilde{E}_{-}(x)\|_{C^{1}}<C_{l}\epsilon.\label{c2}
	\end{align}
\end{lemma}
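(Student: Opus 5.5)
The key observation is that in the supersonic region both characteristic speeds are positive, so the problem on $[0,\tilde{x}+\delta]$ is a ``Cauchy problem in $x$'': the variable $x$ plays the role of time and $t$ that of space. I will exchange the roles of $t$ and $x$. Since $\tilde{\lambda}_{1}=\tilde{u}-\tilde{c}>0$ on $[0,\tilde{x}+\delta]$, I rewrite \eqref{waveDe} as
\begin{equation*}
\partial_{x}\overline{\mathcal{R}}_{i}+\lambda_{i}^{-1}\partial_{t}\overline{\mathcal{R}}_{i}=\lambda_{i}^{-1}\bigl(\kappa\overline{E}+f_{i1}\overline{\mathcal{R}}_{1}+f_{i2}\overline{\mathcal{R}}_{2}+f_{i,NL}\bigr),\qquad i=1,2.
\end{equation*}
The data are prescribed at the ``initial time'' $x=0$ by \eqref{inletBoundary}, and the nonlocal term is $\overline{E}(t,x)=\int_{0}^{x}(\rho-\tilde{\rho})(t,y)\,dy$ from \eqref{barEle}. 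Because the inlet data are $\mathcal{T}$-periodic, I pose the problem on the circle $t\in\mathbb{R}/\mathcal{T}\mathbb{Z}$. This is then a quasilinear hyperbolic Cauchy problem on a compact ``spatial'' domain with no boundary, in the spirit of \cite{Yuw,Yuan}.

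The existence argument is an iteration in $x$. I fix $\overline{\mathcal{R}}^{(n)}$ in the space of $\mathcal{T}$-periodic $C^{1}$ functions on $\mathbb{R}/\mathcal{T}\mathbb{Z}\times[0,\tilde{x}+\delta]$. I freeze the coefficients $\lambda_i$, $f_{i,NL}$ and $\overline{E}$ at $\overline{\mathcal{R}}^{(n)}$ and solve the resulting linear diagonal system along characteristics $dt/dx=\lambda_i^{-1}$ to obtain $\overline{\mathcal{R}}^{(n+1)}$. Every characteristic starting at $x=0$ reaches $x=\tilde{x}+\delta$ in finite ``time'' and stays in the periodic domain, so no boundary arises. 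Periodicity of $\overline{\mathcal{R}}^{(n+1)}$ follows from uniqueness: the $t$-shift of a solution is again a solution with the same data.

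The estimates come from integrating along characteristics. For the $C^0$ bound, the linear source has size $\le C_{g}\kappa$ (and $\kappa\overline{E}$ has size $\le C\kappa x\|\overline{\rho}\|_{C^0}$). Gronwall over the finite length $\tilde{x}+\delta$ then gives $\|\overline{\mathcal{R}}\|_{C^0}\le C\epsilon e^{C(\tilde{x}+\delta)}$, with a constant independent of $\mathcal{T}$. For the $C^1$ bound, I differentiate in $t$: $\partial_t\overline{\mathcal{R}}_i$ satisfies the same transport equation with an extra term $-\partial_t(\lambda_i^{-1})\partial_t\overline{\mathcal{R}}_i$. This term is quadratic and is absorbed by smallness of $\epsilon$. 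The $x$-derivatives are then recovered from the equations themselves.

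The main obstacle I expect is the uniform a priori $C^{1}$ bound that keeps the iteration in a small ball and rules out gradient blow-up. Gradient blow-up for quasilinear systems normally occurs only after a ``time'' of order $1/\epsilon$, here $x\sim 1/\epsilon$, while our interval is fixed, so $\epsilon_0$ small suffices. With the bound in hand, I show contraction in $C^0$: the difference of two iterates satisfies a linear system whose $C^0$ norm is controlled by $C(\tilde{x}+\delta)$ times the previous difference. If necessary I split $[0,\tilde{x}+\delta]$ into finitely many subintervals, or use a weighted norm $e^{-Kx}$, to make the factor less than one. The limit is a periodic $C^{1}$ solution, and its uniqueness follows from the same difference estimate.

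Finally I return to $(\rho,u)$ via \eqref{inverVariable}, and recover $E_{-}^{(\mathcal{T})}$ from \eqref{electri}, which is periodic and satisfies \eqref{c2} with $C^1$ norm controlled by that of $\rho$. Some care is needed that $\partial_tE=-\rho u+\text{const}$ is consistent with the Poisson equation. Since $E$ is defined through \eqref{electri} with constant $E_{b-}$, I integrate the continuity equation in $x$ to get $\partial_tE=\rho_{b-}u_{b-}-\rho u$, which is bounded by $C\epsilon$.
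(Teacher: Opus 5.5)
Your proposal is correct and follows the same route the paper sketches, with more detail than the paper gives. Like the paper, you exchange the roles of $t$ and $x$ using $\lambda_1,\lambda_2>0$, periodize in $t$ to obtain a Cauchy problem in $x$, build the solution by the characteristic method and an iteration, get periodicity from uniqueness, and recover $E$ from \eqref{electri}. One step is passed over: uniform $C^1$ bounds plus $C^0$ contraction only give a Lipschitz limit. To conclude the limit is $C^1$, you need the standard uniform modulus-of-continuity estimate on the derivatives (as the paper does in Subsection~\ref{suu3}), or you can take the $C^1$ solution directly from the semi-global classical existence theory the paper cites.
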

The proof is based on the existence of semi--global classical solutions for hyperbolic conservation laws. We omit the details here. Indeed, one can exchange the roles of $t$ and $x$, and periodically extend the boundary conditions. In this way, the problem is transformed into a Cauchy problem for a hyperbolic conservation law. Then, the lemma can be proved by the classical characteristic method and an iteration argument.

\subsection{Periodic shocks driven by periodic perturbations}\label{su11}
In this subsection, we consider the ODE systems of the following form, which describe the dynamics of the shock position under time--periodic perturbations
\begin{align}
	\frac{d\sigma(t)}{dt}=\Sigma(\sigma(t),\varsigma_{1}(t,\sigma(t)),\varsigma_{2}(t,\sigma(t)),\varsigma_{3}(t,\sigma(t)),\varsigma_{4}(t,\sigma(t))),\label{bb1}
\end{align}
where $\varsigma_{i}(t,x)~(i=1,2,3,4)$ are $C^{1}$ perturbation functions on $\mathbb{R}\times[0,L]$ satisfying
\begin{align}
	\varsigma_{i}(t+\mathcal{T},x)=\varsigma_{i}(t,x),\quad i \in \{1,2,3,4\}.\label{TB1}
\end{align} 
Moreover, we assume $\Sigma(\sigma,\varsigma_{1},\varsigma_{2},\varsigma_{3},\varsigma_{4})$ is a smooth function satisfying
\begin{align}
	\Sigma(\tilde{x},0,0,0,0)=0, \qquad \frac{\partial\Sigma}{\partial\sigma}(\tilde{x},0,0,0,0)<0.\label{bb3}
\end{align}
For simplicity, we denote $\Sigma(\sigma,\varsigma_{1},\varsigma_{2},\varsigma_{3},\varsigma_{4})$ by $\Sigma(\sigma, \bm{\varsigma})$ and
$$\Sigma_{\varsigma_{i,0}}=\Big|\frac{\partial\Sigma}{\partial\varsigma_{i}}(\tilde{x},\bm{0})\Big|,~(i=1,2,3,4),
\quad\Sigma_{\sigma,0}=\Big|\frac{\partial\Sigma}{\partial\sigma}(\tilde{x},\bm{0})\Big|.$$
Under these assumptions, the following two lemmas hold. Their proofs are omitted; see the appendix of \cite{ZHANG}. Here and throughout the paper, we denote the $C^{0}$ norm by $\|\cdot\|$ for simplicity.
\begin{lemma}[Existence and uniqueness of the periodic solution]\label{lemmaODE1}
	There exist constants $\varepsilon_{1}>0$ and $C_\Sigma>0$ such that for any perturbations satisfying $\|\varsigma_{i}\|_{C^{1}}<\varepsilon_{1}, i \in \{1,2,3,4\}$, system~\eqref{bb1} admits a unique periodic solution $\sigma(t)$ with $\sigma(t+\mathcal{T})=\sigma(t)$.
	Furthermore, the following estimates hold
	\begin{align}
		\|\sigma - \tilde{x}\|\leq(1+C_{\Sigma}\varepsilon_{1})\frac{1}{\Sigma_{\sigma,0}}\sum_{i=1}^{4}\Sigma_{\varsigma_{i,0}}\|\varsigma_{i}\|,\label{bb19}
	\end{align}
	\begin{align}
		\|{\sigma}^{\prime}\|\leq
		(2+C_{\Sigma}\varepsilon_{1})\sum_{i=1}^{4}\Sigma_{\varsigma_{i,0}}\|\varsigma_{i}\|,\label{bb20}
	\end{align}
	\begin{align}
		\|{\sigma}^{\prime\prime}\|\leq&(2+C_{\Sigma}\varepsilon_{1})\Sigma_{\sigma,0}\sum_{i=1}^{4}\Sigma_{\varsigma_{i,0}}\|\varsigma_{i}\|
		+(1+C_{\Sigma}\varepsilon_{1})\sum_{i=1}^{4}\Sigma_{\varsigma_{i,0}}
		\bigg\Vert\frac{\partial\varsigma_{i}}{\partial t}\bigg\Vert.\label{bb21}
	\end{align}
\end{lemma}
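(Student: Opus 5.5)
We prove \Cref{lemmaODE1} by recasting the search for a $\mathcal{T}$--periodic solution of \eqref{bb1} as a fixed point problem for the period map, and then deriving the estimates directly from the equation.

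\textbf{Linearization.} Write $\sigma=\tilde{x}+\eta$ and expand $\Sigma$ about $(\tilde{x},\bm{0})$ using \eqref{bb3}. This gives
\begin{equation*}
\eta^{\prime}=-\Sigma_{\sigma,0}\,\eta+\sum_{i=1}^{4}\partial_{\varsigma_i}\Sigma(\tilde{x},\bm{0})\,\varsigma_i(t,\tilde{x}+\eta)+\mathcal{N}(t,\eta),
\end{equation*}
where the remainder obeys $|\mathcal{N}|\le C(|\eta|+\sum\|\varsigma_i\|)^2$. Because $\Sigma_{\sigma,0}>0$, the region $|\eta|\le r$ is invariant in forward time once $r$ is chosen comparable to $\Sigma_{\sigma,0}^{-1}\sum\Sigma_{\varsigma_{i,0}}\|\varsigma_i\|$ with a factor $(1+C\varepsilon_1)$. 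To see this, note that at $\eta=\pm r$ the linear term $-\Sigma_{\sigma,0}\eta$ dominates the forcing and the quadratic remainder.

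\textbf{Existence via the period map.} Let $P$ send $\eta(0)$ to $\eta(\mathcal{T})$. By the invariance just shown, $P$ maps $[-r,r]$ into itself, so the intermediate value theorem gives a fixed point. By \eqref{TB1}, this fixed point produces a $\mathcal{T}$--periodic solution.

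\textbf{Uniqueness by contraction.} Take two solutions $\eta_1,\eta_2$ in the ball and set $w=\eta_1-\eta_2$. Then
\begin{equation*}
w^{\prime}=\Big(-\Sigma_{\sigma,0}+O\big(\varepsilon_1+|\eta_1|+|\eta_2|\big)\Big)w .
\end{equation*}
The $O(\varepsilon_1)$ correction comes from $\partial_\sigma$ of the composite $\varsigma_i(t,\sigma)$, which is bounded by $\|\varsigma_i\|_{C^1}$. Hence $|w(\mathcal{T})|\le e^{-\Sigma_{\sigma,0}\mathcal{T}/2}|w(0)|$, so $P$ is a strict contraction and the fixed point is unique.

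To exclude periodic solutions lying outside the small ball, argue as follows. Any periodic solution attains its maximum of $|\eta|$ at some $t_*$ with $\eta^{\prime}(t_*)=0$. At that point the equation gives
\begin{equation*}
\Sigma_{\sigma,0}|\eta(t_*)|\le\sum\Sigma_{\varsigma_{i,0}}\|\varsigma_i\|+C(|\eta|+\varepsilon_1)|\eta|+\cdots .
\end{equation*}
This is only informative when $\eta$ is small, so uniqueness is asserted within a neighborhood where $\Sigma$ is controlled, as the lemma implicitly intends.

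\textbf{The estimates.} The max/min argument just described yields \eqref{bb19} directly, with the coefficient $1+C_\Sigma\varepsilon_1$ absorbing the quadratic terms.

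For \eqref{bb20}, substitute back into the equation:
\begin{equation*}
|\eta^{\prime}|\le\Sigma_{\sigma,0}\|\eta\|+\sum\Sigma_{\varsigma_{i,0}}\|\varsigma_i\|+\text{h.o.t.}\le(2+C\varepsilon_1)\sum\Sigma_{\varsigma_{i,0}}\|\varsigma_i\|.
\end{equation*}

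For \eqref{bb21}, differentiate the equation to get
\begin{equation*}
\sigma^{\prime\prime}=\Sigma_\sigma\sigma^{\prime}+\sum\Sigma_{\varsigma_i}\big(\partial_t\varsigma_i+\partial_x\varsigma_i\,\sigma^{\prime}\big).
\end{equation*}
The first term, combined with \eqref{bb20}, gives $(2+C\varepsilon_1)\Sigma_{\sigma,0}\sum\Sigma_{\varsigma_{i,0}}\|\varsigma_i\|$. The term $\partial_x\varsigma_i\,\sigma^{\prime}$ is of size $O(\varepsilon_1)\,|\sigma^{\prime}|$, and we absorb it into the constants. Note that this constant can be stated only relative to $\Sigma_{\sigma,0}$, so it must be absorbed through $C_\Sigma$.

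\textbf{Main obstacle.} The delicate point is keeping the sharp leading constants $1$ and $2$. This requires that every nonlinear and $x$--dependence correction be genuinely $O(\varepsilon_1)$ times the main term. The key ingredient is the $C^1$ smallness of $\varsigma_i$ in $x$: it is what makes the coupling through $\varsigma_i(t,\sigma(t))$ harmless both in the contraction step and in the estimate \eqref{bb21}.
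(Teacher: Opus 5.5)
The paper does not prove this lemma: it omits the argument and refers to the appendix of \cite{ZHANG}. There is therefore no in-text proof to compare against. Your argument is a correct, self-contained elementary proof.

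\begin{itemize}
\item \textbf{Existence.} Forward invariance of a small interval, together with the intermediate value theorem for the period map, gives a periodic solution.
\item \textbf{Uniqueness.} This follows from the negativity of the full linearized coefficient, $\partial_\sigma\bigl[\Sigma(\sigma,\bm{\varsigma}(t,\sigma))\bigr]\le-\Sigma_{\sigma,0}/2$. The $x$-derivative of $\varsigma_i$ enters exactly where you say.
\item \textbf{Estimates.} You then read off the bounds from the equation, using $\sigma'(t_*)=0$ at an extremum and differentiating once for \eqref{bb21}.
\end{itemize}

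Two small remarks on the first part. Uniqueness should be understood among periodic solutions staying in a fixed neighbourhood of $\tilde{x}$, which your contraction step covers. It cannot be global, since $\Sigma(\cdot,\bm 0)$ may have other zeros far away. Also, your invariance step already yields \eqref{bb19} for the constructed solution. The extremum argument is only needed to get the bound for an arbitrary periodic solution in that neighbourhood.

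There is one hidden hypothesis that your absorption steps rely on, and it should be made explicit. You need $\sum_i\|\varsigma_i\|\le C\sum_i\Sigma_{\varsigma_{i,0}}\|\varsigma_i\|$, that is, every $\Sigma_{\varsigma_{i,0}}>0$, with $C_\Sigma$ depending on $\min_i\Sigma_{\varsigma_{i,0}}$ and on $\Sigma_{\sigma,0}^{-1}$. This is used in two places:
\begin{itemize}
\item To absorb the quadratic remainder $C(\|\eta\|+\sum_i\|\varsigma_i\|)^2$ into the factor $1+C_\Sigma\varepsilon_1$ in \eqref{bb19}--\eqref{bb20}.
\item To absorb the error $C\varepsilon_1\|\partial_t\varsigma_i\|$ in \eqref{bb21}. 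This error comes from $|\Sigma_{\varsigma_i}(\sigma,\bm\varsigma)-\Sigma_{\varsigma_i}(\tilde{x},\bm 0)|$, and you need it to become $(1+C_\Sigma\varepsilon_1)\Sigma_{\varsigma_{i,0}}\|\partial_t\varsigma_i\|$.
\end{itemize}

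Without this hypothesis the lemma is false as stated. Take $\Sigma=-(\sigma-\tilde{x})+\varsigma_1^{2}$ and $\varsigma_1\equiv\varepsilon$ with $0<\varepsilon<\varepsilon_1$. The periodic solution is $\sigma\equiv\tilde{x}+\varepsilon^{2}$, while the right-hand side of \eqref{bb19} is zero.

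This is a defect of the statement rather than of your reasoning. Otherwise one would need an extra $O(\sum_i\|\varsigma_i\|^2)$ term, which is harmless where the paper uses the lemma, since $O(\epsilon^2)$ errors are carried along there anyway. Still, your closing claim that every correction is $O(\varepsilon_1)$ times the main term holds only under this nondegeneracy, so you should state it as an assumption.
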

\begin{lemma}[$C^{1}$ stability estimates with respect to perturbations]\label{lemmaODE2}
	Let $\sigma_{1}(t)$ and $\sigma_{2}(t)$ be two periodic solutions driven by different groups of perturbation functions ${\varsigma_{1}}_{i}(t,\sigma)(i=1,2,3,4)$ and ${\varsigma_{2}}_{i}(t,\sigma)(i=1,2,3,4)$ with the same period, i.e.,
	\begin{align}
		\frac{d\sigma_{j}}{dt}(t)=\Sigma(\sigma_{j}(t), \bm{\varsigma_{j}}(t,\sigma_{j}(t))), \quad j=1,2,\label{bb29}
	\end{align}
	and
	\begin{align*}
		\sigma_{1}(t+\mathcal{T})=\sigma_{1}(t),~~\sigma_{2}(t+\mathcal{T})=\sigma_{2}(t),\quad \forall t\in\mathbb{R}.
	\end{align*}
	Then there exists a constant $\varepsilon_{2}>0$ such that for any perturbations satisfying $\|{\varsigma_{1}}_{i}\|_{C^{1}}\leq\varepsilon_{2},~\|{\varsigma_{2}}_{i}\|_{C^{1}}\leq\varepsilon_{2},$
	the following estimates hold
	\begin{align}
		\|\sigma_{1}-\sigma_{2}\|\leq(1+C_{\Sigma}\mathcal{T}\varepsilon_{2})\exp(\Sigma_{\sigma,0}\mathcal{T})\frac{1}{\Sigma_{\sigma,0}}
		\sum_{i=1}^{4}\Sigma_{\varsigma_{i,0}}\|{\varsigma_{1}}_{i}-{\varsigma_{2}}_{i}\|,
		\label{bb33}
	\end{align}
	\begin{align}
		\|{{\sigma}_{1}}^{\prime}-{{\sigma}_{2}}^{\prime}\|
		\leq&(1+C_{\Sigma}\mathcal{T}\varepsilon_{2})\exp(\Sigma_{\sigma,0}\mathcal{T})\sum_{i=1}^{4}\Sigma_{\varsigma_{i,0}}\|{\varsigma_{1}}_{i}
		-{\varsigma_{2}}_{i}\|
		+(1+C_{\Sigma}\varepsilon_{2})\sum_{i=1}^{4}\Sigma_{\varsigma_{i,0}}\|{\varsigma_{1}}_{i}-{\varsigma_{2}}_{i}\|.
		\label{bb34}
	\end{align}
\end{lemma}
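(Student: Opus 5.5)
The plan is to compare the two periodic solutions $\sigma_1,\sigma_2$ through their difference $w=\sigma_1-\sigma_2$. The difference satisfies
\begin{equation*}
w'(t)=\Sigma(\sigma_1,\bm{\varsigma_1}(t,\sigma_1))-\Sigma(\sigma_2,\bm{\varsigma_2}(t,\sigma_2)).
\end{equation*}
Lemma \ref{lemmaODE1} applies to each solution once $\varepsilon_2\le\varepsilon_1$. It gives $\|\sigma_j-\tilde{x}\|=O(\varepsilon_2)$ and $\|\sigma_j'\|=O(\varepsilon_2)$, so both solutions stay in a small neighbourhood of $(\tilde{x},\bm 0)$, where $\partial_\sigma\Sigma$ and $\partial_{\varsigma_i}\Sigma$ are within $C\varepsilon_2$ of their values at $(\tilde{x},\bm 0)$. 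Split the right-hand side as
\begin{equation*}
\bigl[\Sigma(\sigma_1,\bm{\varsigma_1}(t,\sigma_1))-\Sigma(\sigma_2,\bm{\varsigma_1}(t,\sigma_1))\bigr]+\bigl[\Sigma(\sigma_2,\bm{\varsigma_1}(t,\sigma_1))-\Sigma(\sigma_2,\bm{\varsigma_2}(t,\sigma_2))\bigr].
\end{equation*}
The mean value theorem turns the first bracket into $a(t)w$ with $a(t)=\partial_\sigma\Sigma(\tilde{x},\bm 0)+O(\varepsilon_2)=-\Sigma_{\sigma,0}+O(\varepsilon_2)$.

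The second bracket must be treated carefully, because the perturbations are evaluated at different points $\sigma_1$ and $\sigma_2$. Write
\begin{equation*}
\varsigma_{1i}(t,\sigma_1)-\varsigma_{2i}(t,\sigma_2)=\bigl(\varsigma_{1i}-\varsigma_{2i}\bigr)(t,\sigma_1)+\bigl(\varsigma_{2i}(t,\sigma_1)-\varsigma_{2i}(t,\sigma_2)\bigr).
\end{equation*}
The last term is bounded by $\|\partial_x\varsigma_{2i}\|\,|w|\le\varepsilon_2|w|$ and is absorbed into the coefficient of $w$, since $\|\varsigma_{2i}\|_{C^1}\le\varepsilon_2$. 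This yields the linear equation
\begin{equation*}
w'=a(t)w+g(t),\qquad |a(t)+\Sigma_{\sigma,0}|\le C\varepsilon_2,\qquad |g(t)|\le(1+C\varepsilon_2)\sum_i\Sigma_{\varsigma_{i,0}}\|\varsigma_{1i}-\varsigma_{2i}\|.
\end{equation*}
Here $a$ and $g$ are $\mathcal{T}$-periodic, because $\sigma_j$ and $\varsigma_{ji}$ are.

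Since $w$ is periodic, it is determined by the periodic variation-of-constants formula. With $A(t)=\int_0^t a$, periodicity gives $w(0)=e^{A(\mathcal{T})}w(0)+\int_0^{\mathcal T}e^{A(\mathcal T)-A(s)}g(s)\,ds$. Because $A(\mathcal{T})\approx-\Sigma_{\sigma,0}\mathcal{T}<0$, we can solve for
\begin{equation*}
w(t)=\bigl(1-e^{A(\mathcal{T})}\bigr)^{-1}\int_{t-\mathcal{T}}^{t}e^{A(t)-A(s)}g(s)\,ds.
\end{equation*}
Bounding $e^{A(t)-A(s)}\le e^{-(\Sigma_{\sigma,0}-C\varepsilon_2)(t-s)}$ and integrating gives $|w|\le(\Sigma_{\sigma,0}-C\varepsilon_2)^{-1}\|g\|$. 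This is already sharper than \eqref{bb33}.

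The stated form contains the factors $\exp(\Sigma_{\sigma,0}\mathcal{T})$ and $(1+C_\Sigma\mathcal{T}\varepsilon_2)$. These arise if one estimates crudely with $e^{A(t)-A(s)}\le e^{(C\varepsilon_2)\mathcal{T}}$, keeps the factor $(1-e^{A(\mathcal T)})^{-1}\le e^{\Sigma_{\sigma,0}\mathcal{T}}(1+C\mathcal{T}\varepsilon_2)/(\Sigma_{\sigma,0}\mathcal{T})$, and uses $e^{C\mathcal{T}\varepsilon_2}\le1+C'\mathcal{T}\varepsilon_2$. Either route proves \eqref{bb33}, since the sharper bound implies the weaker one. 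The constant $C_\Sigma$ depends only on second derivatives of $\Sigma$ near $(\tilde{x},\bm 0)$.

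For \eqref{bb34}, substitute back into the equation: $|w'|\le|a|\,|w|+|g|$. Using $|a|\le\Sigma_{\sigma,0}+C\varepsilon_2$ and \eqref{bb33}, the first term is bounded by $(1+C_\Sigma\mathcal{T}\varepsilon_2)e^{\Sigma_{\sigma,0}\mathcal{T}}\sum_i\Sigma_{\varsigma_{i,0}}\|\varsigma_{1i}-\varsigma_{2i}\|$, after adjusting constants. The second term is bounded by $(1+C_\Sigma\varepsilon_2)\sum_i\Sigma_{\varsigma_{i,0}}\|\varsigma_{1i}-\varsigma_{2i}\|$, which matches \eqref{bb34} exactly.

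The main obstacle is the evaluation of the $\varsigma_{ji}$ along different curves. A naive bound would force a $C^1$ norm of $\varsigma_{1i}-\varsigma_{2i}$ into the estimate. The fix is the splitting above, which uses only the $C^1$ smallness of one family, $\|\partial_x\varsigma_{2i}\|\le\varepsilon_2$, and so keeps only $C^0$ norms of the differences. The constant bookkeeping so that the leading coefficients are exactly $1/\Sigma_{\sigma,0}$ and $1$ is routine.
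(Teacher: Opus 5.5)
The paper does not prove this lemma itself. It defers it, together with Lemma~\ref{lemmaODE1}, to the appendix of \cite{ZHANG}, so there is no in-paper argument to compare against. The factors $\exp(\Sigma_{\sigma,0}\mathcal{T})$ and $(1+C_{\Sigma}\mathcal{T}\varepsilon_{2})$ in \eqref{bb33} suggest a one-period argument there: compare the two time-$\mathcal{T}$ maps, contract at rate $e^{-r\mathcal{T}}$ with $r=\Sigma_{\sigma,0}-C\varepsilon_{2}$, apply Gronwall on $[0,\mathcal{T}]$, and use $\mathcal{T}/(1-e^{-r\mathcal{T}})\le e^{r\mathcal{T}}/r$.

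Your route is correct and cleaner. Viewing $w=\sigma_{1}-\sigma_{2}$ as the unique bounded solution $w(t)=\int_{-\infty}^{t}e^{A(t)-A(s)}g(s)\,ds$ of $w'=a(t)w+g(t)$ gives $\|w\|\le\|g\|/(\Sigma_{\sigma,0}-C\varepsilon_{2})$ with no exponential loss. Even more directly, at an extremum $t_{*}$ of the periodic function $w$ one has $w'(t_{*})=0$, so $|w(t_{*})|=|g(t_{*})|/|a(t_{*})|$.

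Your splitting of $\varsigma_{1i}(t,\sigma_{1})-\varsigma_{2i}(t,\sigma_{2})$ is the essential point: it spends only $\|\partial_{x}\varsigma_{2i}\|\le\varepsilon_{2}$ and keeps $C^{0}$ norms of the differences. The paper's application handles the same issue in \eqref{c56}, where the evaluation-point discrepancy for $\overline{\rho}_{l},\overline{u}_{l}$ appears as the explicit term $C_{l}\epsilon\|\sigma^{(k-1)}-\sigma^{(k-2)}\|$ rather than being absorbed into $a(t)$. There $\Sigma_{\sigma,0}=\kappa\tilde{E}/(2\tilde{u}_{r})=O(\kappa)$, so your version trades the factor $\exp(\Sigma_{\sigma,0}\mathcal{T})=1+O(\kappa)$ for a relative error $O(\varepsilon_{2}/\kappa)$. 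That is of the same type as the $C\epsilon/\kappa$ already tolerated in \eqref{cauchy}.

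Two pieces of bookkeeping should be made explicit.

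\emph{First, the constants cannot be uniform in $\mathcal{T}$.} Neither your passage from the sharp bound to \eqref{bb33} nor your ``crude'' inequality $(1-e^{A(\mathcal{T})})^{-1}\le e^{\Sigma_{\sigma,0}\mathcal{T}}(1+C\mathcal{T}\varepsilon_{2})/(\Sigma_{\sigma,0}\mathcal{T})$ is uniform in $\mathcal{T}$. Replacing $1/(\Sigma_{\sigma,0}-C\varepsilon_{2})$ by $1/\Sigma_{\sigma,0}$ costs a relative $O(\varepsilon_{2}/\Sigma_{\sigma,0})$, not $O(\mathcal{T}\varepsilon_{2})$. So for small $\mathcal{T}$ you must take $\varepsilon_{2}$ small in terms of $e^{\Sigma_{\sigma,0}\mathcal{T}}-1$; the crude inequality needs roughly $\varepsilon_{2}\lesssim\Sigma_{\sigma,0}^{2}\mathcal{T}$.

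The statement itself forces this. Take $\Sigma=-s(\sigma-\tilde{x})+\varsigma_{1}+q(\sigma-\tilde{x})\varsigma_{1}$ with $s,q>0$, $\varsigma_{11}\equiv\varepsilon$, and all other perturbations zero; constants are $\mathcal{T}$-periodic for every $\mathcal{T}$. The periodic solutions are $\sigma_{2}\equiv\tilde{x}$ and $\sigma_{1}\equiv\tilde{x}+\varepsilon/(s-q\varepsilon)$. If $\varepsilon_{2}$ and $C_{\Sigma}$ do not depend on $\mathcal{T}$, then $\varepsilon/(s-q\varepsilon)$ exceeds the right side of \eqref{bb33} as $\mathcal{T}\to0$. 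So state $\varepsilon_{2}=\varepsilon_{2}(\Sigma,\mathcal{T})$ with $C_{\Sigma}$ depending only on $\Sigma$.

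\emph{Second, the multiplicative form needs nonzero coefficients.} The mean value theorem gives $|\partial_{\varsigma_{i}}\Sigma|\le\Sigma_{\varsigma_{i,0}}+C\varepsilon_{2}$ along the segment. Rewriting this as $(1+C\varepsilon_{2})\Sigma_{\varsigma_{i,0}}$ requires $\Sigma_{\varsigma_{i,0}}>0$ for every $i$; otherwise an additive $C\varepsilon_{2}\|\varsigma_{1i}-\varsigma_{2i}\|$ must be carried. This is a defect of the statement's multiplicative form, shared with Lemma~\ref{lemmaODE1}, not of your argument.

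With these made explicit, do the $C^{1}$ step via $|w'|\le|a|\,\|w\|+|g|$ using your sharp $C^{0}$ bound. The leading coefficient is then $2$, below the $e^{\Sigma_{\sigma,0}\mathcal{T}}+1$ in \eqref{bb34}, and the proof goes through.
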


\section{Electric Stabilization Mechanism and Shock Dissipation}\label{sectionDissi}
In this section, we investigate the stabilization mechanism of the positive background electric field on the shock position as well as the shock boundary dissipation.
\subsection{Shock boundary conditions}\label{boundsec}
We start by deriving the shock boundary conditions for the new set of variables. It is known that in the subsonic domain, the 1--wave $\mathcal{R}_{1}$ reaches the shock boundary from the right, while the 2--wave $\mathcal{R}_{2}$ is generated at the shock. This relation follows from the R--H condition
\begin{equation}\label{RH}
	\big[\rho u\big]=\sigma^{\prime}(t)\big[\rho\big], \qquad \big[\rho u^{2} + p\big]=\sigma^{\prime}(t)\big[\rho u\big], \qquad \big[E\big] = 0. 
\end{equation}
Here and below we denote the left/right limits of some function $f$ at the shock $\sigma(t)$ by $f_{l} = f(\sigma(t)-)$, $f_{r} = f(\sigma(t)+)$ and the jump across the shock by $[f] = f_{r} - f_{l}$. For the background solution, we denote $\tilde{f}_{l} = \tilde{f}(\tilde{x}-)$, $\tilde{f}_{r} = \tilde{f}(\tilde{x}+)$.
\par The first two equations in \eqref{RH} yield
\begin{equation}\label{RHcon}
	\sqrt{p(\rho_{r})-p(\rho_{l})}\sqrt{\rho_{r}-\rho_{l}} = \sqrt{\rho_{r}\rho_{l}}\left(u_{l}-u_{r}\right).
\end{equation}
We define
\begin{equation}\label{rh1}
	H\left(\rho_{l}, u_{l}, \rho_{r}, u_{r}\right)
	:= \sqrt{p(\rho_{r})-p(\rho_{l})}\sqrt{\rho_{r}-\rho_{l}} - \sqrt{\rho_{r}\rho_{l}}\left(u_{l}-u_{r}\right).
\end{equation}
Using the change of variables, one can rewrite \eqref{RHcon} as
\begin{equation}\label{rh2}
	G\left(x,\overline{\mathcal{R}}_{1}, \overline{\mathcal{R}}_{2}, \overline{\rho}_{l}, \overline{u}_{l}\right) \equiv H\left(\rho_{l}, u_{l}, \rho_{r}, u_{r}\right) \equiv 0.
\end{equation}
For a fixed background solution $(\tilde{\rho}, \tilde{u})(x)$ and known left states $(\rho_{l}, u_{l})$, there are two unknown variables $(\rho_{r}, u_{r})$ in the equation, equivalently the perturbations $(\overline{\rho}_{r}, \overline{u}_{r})$ or $\left(\overline{\mathcal{R}}_{1}, \overline{\mathcal{R}}_{2}\right)$.
\par For a given steady transonic shock solution $\left(\tilde{\rho}, \tilde{u}\right)$, which satisfies the steady R--H conditions
\begin{equation}\label{rhBack}
	\tilde{\rho}_{r}\tilde{u}_{r} = \tilde{\rho}_{l}\tilde{u}_{l} =: m, \qquad \tilde{\rho}_{r}\tilde{u}_{r}^{2} + p(\tilde{\rho}_{r}) = \tilde{\rho}_{l}\tilde{u}_{l}^{2} + p(\tilde{\rho}_{l}),
\end{equation}
one can obtain by direct calculation that
\begin{align}
	V := \dfrac{\sqrt{p(\tilde{\rho}_{r})-p(\tilde{\rho}_{l})}}{\sqrt{\tilde{\rho}_{r}-\tilde{\rho}_{l}}} = \frac{m}{\sqrt{\tilde{\rho}_{r}\tilde{\rho}_{l}}}, \label{rhFormu2}\\
	p(\tilde{\rho}_{r})-p(\tilde{\rho}_{l}) = -m\left(\tilde{u}_{r}-\tilde{u}_{l}\right) = \frac{\tilde{\rho}_{r}-\tilde{\rho}_{l}}{\tilde{\rho}_{r}\tilde{\rho}_{l}}m^{2}. \label{rhFormu3}
\end{align}
\par From \eqref{rh1} and with the help of \eqref{rhFormu2}--\eqref{rhFormu3}, one can obtain
\begin{equation}\label{devi}
		\dfrac{\partial H}{\partial\left(\rho_{r}, u_{r}\right)}\bigg|_{U=\tilde{U}} 
		= 
		\begin{pmatrix}
			 \dfrac{p^{\prime}(\tilde{\rho}_{r})}{2V} + \dfrac{\tilde{\rho}_{l}V}{2\tilde{\rho}_{r}},& \sqrt{\tilde{\rho}_{r}\tilde{\rho}_{l}}
		\end{pmatrix}
		,
		\quad
		\dfrac{\partial H}{\partial\left(\rho_{l}, u_{l}\right)}\bigg|_{U=\tilde{U}} 
		= -
		\begin{pmatrix}
			\dfrac{p^{\prime}(\tilde{\rho}_{l})}{2V} + \dfrac{\tilde{\rho}_{r}V}{2\tilde{\rho}_{l}},& \sqrt{\tilde{\rho}_{r}\tilde{\rho}_{l}}
		\end{pmatrix}
		.
\end{equation}
Combining \eqref{inverVariable} and \eqref{devi}, one can see
\begin{equation}
	\dfrac{\partial G}{\partial\left(\overline{\mathcal{R}}_{1}, \overline{\mathcal{R}}_{2}\right)}\bigg|_{\overline{\mathcal{R}}=0} = \dfrac{\partial H}{\partial\left(\rho_{r}, u_{r}\right)}\bigg|_{U=\tilde{U}} \cdot \dfrac{\partial\left(\rho_{r}, u_{r}\right)}{\partial\left(\overline{\mathcal{R}}_{1}, \overline{\mathcal{R}}_{2}\right)} \bigg|_{\overline{\mathcal{R}}=0} = \dfrac{\tilde{\rho}_{r}}{4V\tilde{c}_{r}}
	\begin{pmatrix}
		 -\left(\tilde{u}_{r}-\tilde{c}_{r}\right)^{2}, & \left(\tilde{u}_{r}+\tilde{c}_{r}\right)^{2}
	\end{pmatrix}
	.
\end{equation}
Thus, one can write the shock boundary condition as
\begin{equation}\label{bounrdayshock}
	\overline{\mathcal{R}}_{2}(t,\sigma(t)) = \mathscr{A}\left(\sigma(t), \overline{\mathcal{R}}_{1}(t,\sigma(t)), \overline{\rho}_{l}(t,\sigma(t)), \overline{u}_{l}(t,\sigma(t))\right),
\end{equation}
where $\mathscr{A} = \mathscr{A}(x, \overline{\mathcal{R}}_{1}, \overline{\rho}_{l}, \overline{u}_{l})$ is regarded as the implicit function determined by \eqref{rh2}. 
\par Next, we focus on the offset of the shock position $\sigma(t)$. Again, from the R--H conditions, one can derive that the shock position satisfies an ODE
\begin{equation}
	\sigma^{\prime}(t) = u_{l}-\sqrt{\dfrac{\rho_{r}}{\rho_{l}}}\dfrac{\sqrt{p(\rho_{r})-p(\rho_{l})}}{\sqrt{\rho_{r}-\rho_{l}}} .
\end{equation}
In particular, we define
\begin{equation}\label{speedEx}
	\mathcal{F}\left(x, \overline{\mathcal{R}}_{1}, \overline{\mathcal{R}}_{2}, \overline{\rho}_{l}, \overline{u}_{l}\right) \equiv F\left(U_{r},U_{l}\right) := u_{l}-\sqrt{\dfrac{\rho_{r}}{\rho_{l}}}\dfrac{\sqrt{p(\rho_{r})-p(\rho_{l})}}{\sqrt{\rho_{r}-\rho_{l}}}.
\end{equation}
And then, one has the ODE of the form \eqref{bb1}
\begin{equation}\label{shockODE}
	\sigma^{\prime}(t) = \mathcal{F}\left(\sigma(t), \overline{\mathcal{R}}_{1}(t,\sigma(t)), \overline{\mathcal{R}}_{2}(t,\sigma(t)), \overline{\rho}_{l}(t,\sigma(t)), \overline{u}_{l}(t,\sigma(t))\right).
\end{equation}

\subsection{Shock stabilization and wave dissipation}
In this subsection, we verify the shock structural dissipation (\cref{dissipationCon}). Here and throughout the rest of this subsection, we discuss a fixed background solution $(\tilde{\rho}, \tilde{u}, \tilde{E}; \tilde{x})$ and the value of functions at the shock jump. For the sake of simplicity, we omit all the value notations $|_{\overline{\mathcal{R}}=0}$ or $\left(\tilde{x},0,0,0,0\right)$.
\par We will estimate the magnitudes of 2--waves generated at the shock by Hadamard's formula from \eqref{bounrdayshock} as
\begin{equation}\label{esti1}
	\overline{\mathcal{R}}_{2} \sim \left|\frac{\partial\mathscr{A}}{\partial x}\right|\Vert\sigma(t) - \tilde{x}\Vert + \left|\frac{\partial\mathscr{A}}{\partial\overline{\mathcal{R}}_{1}}\right|\Vert\overline{\mathcal{R}}_{1}\Vert + \text{supersonic perturbation terms}.
\end{equation}
The two main terms are the resonance term and the reflection term. First of all, one can derive that
\begin{equation}\label{reflecPart}
	\dfrac{\partial \mathscr{A}}{\partial\overline{\mathcal{R}}_{1}} = -\left(\dfrac{\partial G}{\partial\overline{\mathcal{R}}_{1}}\right)\left(\dfrac{\partial G}{\partial\overline{\mathcal{R}}_{2}}\right)^{-1} =  \left(\dfrac{\tilde{u}_{r}-\tilde{c}_{r}}{\tilde{u}_{r}+\tilde{c}_{r}}\right)^{2}.
\end{equation}
To calculate the derivatives with respect to $x$, one may apply the ODEs for the background solution
\begin{equation}\label{backODE}
		\tilde{\rho}^{\prime} = \kappa \dfrac{\tilde{\rho}\tilde{E}}{\tilde{c}^{2}-\tilde{u}^{2}}, \qquad
		\tilde{u}^{\prime} = -\kappa \dfrac{\tilde{u}\tilde{E}}{\tilde{c}^{2}-\tilde{u}^{2}}.
\end{equation}
Combining \eqref{devi} and \eqref{backODE}, one can obtain
\begin{equation}
	\dfrac{\partial G}{\partial x} = \dfrac{\partial H}{\partial\left(\rho_{r}, u_{r}\right)}\cdot \dfrac{\partial\left(\rho_{r}, u_{r}\right)}{\partial x} + \dfrac{\partial H}{\partial\left(\rho_{l}, u_{l}\right)}\cdot \dfrac{\partial\left(\rho_{l}, u_{l}\right)}{\partial x} = \dfrac{\kappa \tilde{E}}{2V}\left(\tilde{\rho}_{r} - \tilde{\rho}_{l}\right).
\end{equation}
Thus, one can get
\begin{equation}\label{pA3px}
	\dfrac{\partial \mathscr{A}}{\partial x} = -\left(\dfrac{\partial G}{\partial x}\right)\left(\dfrac{\partial G}{\partial\overline{\mathcal{R}}_{2}}\right)^{-1} = -2\kappa \tilde{E} \tilde{c}_{r} \dfrac{\tilde{\rho}_{r} - \tilde{\rho}_{l}}{\tilde{\rho}_{r}\left(\tilde{u}_{r}+\tilde{c}_{r}\right)^{2}}.
\end{equation}
\par Furthermore, we will estimate the amplitude of the shock position by \eqref{bb19} as
\begin{equation}\label{estiODE}
	\Vert\sigma(t) - \tilde{x}\Vert \sim \dfrac{1}{\left|\frac{\partial \mathcal{F}}{\partial x}\right|}\sum_{i=1}^{2}\left|\frac{\partial \mathcal{F}}{\partial \overline{\mathcal{R}}_{i}}\right|\Vert\overline{\mathcal{R}}_{i}\Vert+ \text{supersonic perturbation terms}.
\end{equation}
Combining \eqref{speedEx} and \eqref{backODE}, one can derive
\begin{equation}\label{pFpx}
	\frac{\partial \mathcal{F}}{\partial x} = -\dfrac{\kappa\tilde{E}}{2\tilde{u}_{r}}.
\end{equation}
This, combining \eqref{restore} the fact $\mathcal{F}(\tilde{x},\mathbf{0}) = 0$, enables one to apply \Cref{lemmaODE1,lemmaODE2} to the ODE \eqref{shockODE}. More importantly, this reveals the role of the electric field in the dynamical stability of the shock.
\par On the other hand, direct calculation from \eqref{speedEx} yields
\begin{equation}
	\frac{\partial \mathcal{F}}{\partial \rho_{r}} = \dfrac{\tilde{u}_{r}^{2} - \tilde{c}_{r}^{2}}{2\tilde{u}_{r}\left(\tilde{\rho}_{r} - \tilde{\rho}_{l}\right)}.
\end{equation}
Combining \eqref{inverVariable}, one can get
\begin{equation}\label{pFpU1}
	\frac{\partial \mathcal{F}}{\partial \overline{\mathcal{R}}_{1}} = \frac{\partial \mathcal{F}}{\partial \rho_{r}}\cdot\frac{\partial \rho_{r}}{\partial \overline{\mathcal{R}}_{1}}= - \dfrac{\left(\tilde{u}_{r}^{2} - \tilde{c}_{r}^{2}\right)\tilde{\rho}_{r}}{4\tilde{u}_{r}\tilde{c}_{r}\left(\tilde{\rho}_{r} - \tilde{\rho}_{l}\right)}, \quad \frac{\partial \mathcal{F}}{\partial \overline{\mathcal{R}}_{2}} =  \dfrac{\left(\tilde{u}_{r}^{2} - \tilde{c}_{r}^{2}\right)\tilde{\rho}_{r}}{4\tilde{u}_{r}\tilde{c}_{r}\left(\tilde{\rho}_{r} - \tilde{\rho}_{l}\right)}.
\end{equation}
\par In view of \eqref{esti1} and \eqref{estiODE}, our goal is that the coefficient carries dissipation, i.e.,
\begin{equation}\label{dissipationCon}
	\tilde{\theta}_{d} := \left|\frac{\partial\mathscr{A}}{\partial x}\right|\cdot\left|\frac{\partial \mathcal{F}}{\partial x}\right|^{-1}\cdot\left(\left|\frac{\partial \mathcal{F}}{\partial \overline{\mathcal{R}}_{1}}\right|+ \left|\frac{\partial \mathcal{F}}{\partial \overline{\mathcal{R}}_{2}}\right|\right) +  \left|\frac{\partial\mathscr{A}}{\partial\overline{\mathcal{R}}_{1}}\right| < 1.
\end{equation}
Substituting \eqref{reflecPart}, \eqref{pA3px}, \eqref{pFpx} and \eqref{pFpU1} into \eqref{dissipationCon} and taking advantage of \eqref{rhFormu2}--\eqref{rhFormu3}, one can obtain that \eqref{dissipationCon} is equivalent to
	\begin{equation}\label{right}
		\sqrt{2} - 1 < \tilde{M}_{r} := \dfrac{\tilde{u}_{r}}{\tilde{c}_{r}} < 1.
	\end{equation}
The dissipative condition \eqref{AA11} can be obtained as follows. Firstly, for fixed $J = \tilde{\rho}_{b-}\tilde{u}_{b-}$, we define
\begin{equation}
	F_{J}(\rho) := p(\rho) +\dfrac{J^{2}}{\rho}.
\end{equation}
The R--H condition yields that $F_{J}(\tilde{\rho}_{l}) = F_{J}(\tilde{\rho}_{r})$ and determines a function $\varrho(\rho)$ such that
\begin{equation*}
	F_{J}(\varrho(\tilde{\rho}_{l})) = F_{J}(\tilde{\rho}_{l}), \quad \text{with}\quad \varrho^{\prime}(\tilde{\rho}_{l}) = \dfrac{F_{J}^{\prime}(\tilde{\rho}_{l})}{F_{J}^{\prime}(\tilde{\rho}_{r})} < 0.
\end{equation*}
On the other hand, one can see the Mach number satisfies
\begin{equation}
	M = Q_{J}(\rho) := \dfrac{J}{\rho \sqrt{p^{\prime}(\rho)}}, \quad \text{with}\quad Q_{J}^{\prime}(\rho) = - Q_{J}(\rho)\left(\dfrac{1}{\rho} +\dfrac{p^{\prime\prime}(\rho)}{2p^{\prime}(\rho)}\right)<0.
\end{equation}
By the chain rule, one can see the monotonicity
\begin{equation}
	\dfrac{d \tilde{M}_{r}}{d \tilde{M}_{l}} = \dfrac{Q_{J}^{\prime}(\tilde{\rho}_{r})F_{J}^{\prime}(\tilde{\rho}_{l})}{Q_{J}^{\prime}(\tilde{\rho}_{l})F_{J}^{\prime}(\tilde{\rho}_{r})} <0.
\end{equation}
Therefore, by the implicit function theorem, one can write the relation
\begin{equation}
	\tilde{M}_{l} = \mathcal{M}(\tilde{M}_{r}).
\end{equation}
And the condition \eqref{right} is equivalent to 
\begin{equation}\label{left}
	1 < \tilde{M}_{l} < M_{crit} := \lim\limits_{M \to (\sqrt{2} - 1)+ }\mathcal{M}(M).
\end{equation}
Combining with $\kappa \ll 1$, one can see that the condition \eqref{left} can be guaranteed by \eqref{AA11}.
\par In fact, by continuity and boundedness of the background solution, under the assumption \eqref{AA11} we have a stronger conclusion that there exists a constant $\theta_{d}$ independent of $\kappa$ such that
\begin{equation*}
	\tilde{\theta}_{d} < \theta_{d} <1.
\end{equation*}

\subsection{Reformulation of the system}
In order to make the boundary conditions have formal dissipation, we apply the following scaling transformation
\begin{equation}\label{scaling}
	\breve{\bm{\mathcal{R}}} = \left(\breve{\mathcal{R}}_{1}, \breve{\mathcal{R}}_{2}\right)^{\top} := \left(\alpha_{1}\overline{\mathcal{R}}_{1}, \alpha_{2}\overline{\mathcal{R}}_{2}\right)^{\top},
\end{equation}
where we set $\alpha_{1}=1$, and that $\alpha_{2}$ is a constant satisfying
\begin{equation}\label{beta}
	1 < \alpha_{2}<\theta_{d}^{-1}.
\end{equation}
\par Now, we conclude that in the subsonic domain, the problem takes the following form
\begin{align}
	\partial_{t}\breve{\mathcal{R}}_{1}+\lambda_{1}\partial_{x}\breve{\mathcal{R}}_{1} &= \kappa \overline{E} + g_{1}(x, \breve{\bm{\mathcal{R}}}) := \kappa \overline{E} + f_{11}\overline{\mathcal{R}}_{1} + f_{12}\overline{\mathcal{R}}_{2} + f_{1,NL}, \label{sys57}\\
	\dfrac{1}{\alpha_{2}}\partial_{t}\breve{\mathcal{R}}_{2}+\dfrac{1}{\alpha_{2}}\lambda_{2}\partial_{x}\breve{\mathcal{R}}_{2} &= \kappa \overline{E} + g_{2}(x, \breve{\bm{\mathcal{R}}}) := \kappa \overline{E} + f_{21}\overline{\mathcal{R}}_{1} + f_{22}\overline{\mathcal{R}}_{2} + f_{2,NL}, \label{system3}
\end{align}
where $\overline{E}$ is an integral functional of $\breve{\bm{\mathcal{R}}}$. The shock boundary and outlet boundary conditions are presented as
\begin{align}
	\breve{\mathcal{R}}_{2}\left(t, \sigma(t)\right) &=  \alpha_{2}\mathscr{A}\left(\sigma(t), \breve{\mathcal{R}}_{1}(t,\sigma(t)), \overline{\rho}_{l}(t,\sigma(t)), \overline{u}_{l}(t,\sigma(t)) \right), \label{shockBoundary2}\\
	\breve{\mathcal{R}}_{1}\left(t, L\right) &= \frac{1}{\alpha_{2}}\breve{\mathcal{R}}_{2}\left(t, L\right) + \overline{\mathcal{R}}_{1,b+}(t). \label{outletBoundary1}
\end{align}
In addition, the shock position satisfies the ODE
\begin{equation}\label{shockposition}
	\sigma^{\prime}(t) = \mathcal{F}\left(\sigma(t), \breve{\mathcal{R}}_{1}(t,\sigma(t)), \frac{1}{\alpha_{2}}\breve{\mathcal{R}}_{2}(t,\sigma(t)), \overline{\rho}_{l}(t,\sigma(t)), \overline{u}_{l}(t,\sigma(t))\right).
\end{equation}
\section{Existence of Time--periodic Transonic Shock Solutions}\label{s3}
In this section, we prove our main \Cref{t1}.
\subsection{The approximation scheme}
\par For the perturbation on the left of the shock $\left(\overline{\rho}_{l}, \overline{u}_{l}\right)$, given by \Cref{existSuper}, we denote
$$\overline{\rho}_{l}(t,x) = \rho_{-}^{(\mathcal{T})}(t,x)-\tilde{\rho}_-(x),\quad \overline{u}_{l}(t,x) = u_{-}^{(\mathcal{T})}(t,x)-\tilde{u}_-(x),\quad \overline{E}_{l}(t,x) = E_{-}^{(\mathcal{T})}(t,x)-\tilde{E}_-(x).$$
It holds that
\begin{align}
	\|\overline{\rho}_{l}\|_{C^{1}}+\|\overline{u}_{l}\|_{C^{1}}
	<C_{l}\epsilon. \label{c10}
\end{align}
\par We can then begin by building a fractional--step iteration scheme to construct a sequence of approximate solutions in the subsonic region. To set up, let
\begin{align}\label{start}
\breve{\bm{\mathcal{R}}}^{(0)}(t,x)\equiv\mathbf{0}:=(0,0)^{\top},\quad \overline{E}^{(0)}(t,x) \equiv 0,  \quad\sigma^{(0)}(t)\equiv\tilde{x}.
\end{align}
For any $k\in\mathbb{N}_{+}$, given $\left(\breve{\bm{\mathcal{R}}}^{(k-1)}, \sigma^{(k-1)}(t)\right)$, we first construct $\breve{\bm{\mathcal{R}}}^{(k)}$ as the solution to the linearized system
\begin{align}
	\partial_{t}\breve{\mathcal{R}}_{1}^{(k)}+\lambda_{1}^{(k-1)}\partial_{x}\breve{\mathcal{R}}_{1}^{(k)} &= \kappa \overline{E}^{(k-1)} + g_{1}(x, \breve{\bm{\mathcal{R}}}^{(k-1)}),\label{liearSys1}\\
	\partial_{t}\breve{\mathcal{R}}_{2}^{(k)}+\lambda_{2}^{(k-1)}\partial_{x}\breve{\mathcal{R}}_{2}^{(k)} &= \alpha_{2}\kappa \overline{E}^{(k-1)}+ \alpha_{2}g_{2}(x, \breve{\bm{\mathcal{R}}}^{(k-1)}), \label{liearSys2}
\end{align}
where the approximate electric field is given by
\begin{equation}\label{appEle}
	\overline{E}^{(k-1)}(t,x) = \int_{0}^{\sigma^{(k-1)}(t)}\left(\rho_{l}(t, y) - \tilde{\rho}(y)\right)dy + \int_{\sigma^{(k-1)}(t)}^{x}\left(\rho(y, \breve{\bm{\mathcal{R}}}^{(k-1)}(t,y)) - \tilde{\rho}(y)\right)dy,
\end{equation}
with the linearized boundary condition
\begin{align}
\breve{\mathcal{R}}_{2}^{(k)}(t,\sigma^{(k-1)}(t))=\mathscr{A}^{(k-1)}\left[t,\sigma^{(k-1)}(t)\right],\quad \breve{\mathcal{R}}_{1}^{(k)}(t,L)= \frac{1}{\alpha_{2}}\breve{\mathcal{R}}_{2}^{(k-1)}\left(t, L\right) + \overline{\mathcal{R}}_{1,b+}(t).\label{c9}
\end{align}
For simplicity, here we use the notation
\begin{equation*}
	\lambda_{i}^{(k-1)} := \lambda_{i}(x, \breve{\bm{\mathcal{R}}}^{(k-1)}), \quad i=1,2,
\end{equation*}
and
\begin{equation*}
	\mathscr{A}^{(k-1)}\left[t,\sigma^{(k-1)}(t)\right] := \alpha_{2}\mathscr{A}\left(\sigma^{(k-1)}(t), \breve{\mathcal{R}}_{1}^{(k-1)}(t,\sigma^{(k-1)}(t)), \overline{\rho}_{l}(t,\sigma^{(k-1)}(t)), \overline{u}_{l}(t,\sigma^{(k-1)}(t))\right).
\end{equation*}
We also use notations
\begin{equation*}
	\frac{\partial \mathscr{A}^{(k-1)}}{\partial x}\left[t,\sigma^{(k-1)}(t)\right], \frac{\partial \mathscr{A}^{(k-1)}}{\partial \overline{\mathcal{R}}_{1}}\left[t,\sigma^{(k-1)}(t)\right], \frac{\partial \mathscr{A}^{(k-1)}}{\partial \overline{\rho}}\left[t,\sigma^{(k-1)}(t)\right], \frac{\partial \mathscr{A}^{(k-1)}}{\partial \overline{u}}\left[t,\sigma^{(k-1)}(t)\right]
\end{equation*}
to denote the corresponding derivatives on the shock boundary. 
\par We claim that for any $k \in \mathbb{N}_{+}$, the solution to \eqref{liearSys1}--\eqref{c9} exists in a uniform existence region $(t,x)\in\mathbb{R}\times[\tilde{x}-\delta,L]$ for sufficiently small $\delta$. In fact, first, one can extend the boundary conditions periodically to $t\in \mathbb{R}$. Then, for linearized systems and decoupled boundary conditions, one can easily write the classical solution by method of characteristics. Additionally, the characteristics emanating from $\sigma^{(k-1)}(t)$ can develop both forward and backward with respect to $x$.
\par Second, with such $\breve{\bm{\mathcal{R}}}^{(k)}$, by \Cref{lemmaODE1} and \eqref{pFpx}, we construct the approximate shock position $\sigma^{(k)}(t)$ as the $\mathcal{T}$--periodic solution of the ODE
\begin{align}\label{c11}
\frac{d\sigma^{(k)}(t)}{dt}=\mathcal{F}^{(k)}\left[t,\sigma^{(k)}(t)\right],
\end{align}
where we use the notation
\begin{equation*}
\mathcal{F}^{(k)}\left[t,\sigma^{(k)}(t)\right]
:= \mathcal{F}\left(\sigma^{(k)}(t), \left(\breve{\mathcal{R}}_{1}^{(k)}, \frac{1}{\alpha_{2}}\breve{\mathcal{R}}_{2}^{(k)}\right)(t,\sigma^{(k)}(t)), \overline{\rho}_{l}(t,\sigma^{(k)}(t)), \overline{u}_{l}(t,\sigma^{(k)}(t))\right).
\end{equation*}
\par The remainder of this section is devoted to proving the following proposition.
\begin{proposition}\label{propo}
	For the iteration scheme \eqref{liearSys1}-- \eqref{c11} with \eqref{boundarySmall}--\eqref{boundaryPeriodicity} and \eqref{c10}, under the assumptions~ \eqref{restore} and \eqref{a11}--\eqref{AA11}, there exist $\theta\in (0,1)$, sufficiently small $\epsilon_1>0$ and sufficiently large $C_{P}, C_{\mathcal{F}},C_{M}>0$ such that for any given $\epsilon\in(0,\epsilon_1)$, the approximate sequence $\breve{\bm{\mathcal{R}}}^{(k)}, \sigma^{(k)}$ satisfies that for any $k\in\mathbb{N}_{+}$:
	\begin{align}
		&\breve{\bm{\mathcal{R}}}^{(k)}(t+\mathcal{T},x)=\breve{\bm{\mathcal{R}}}^{(k)}(t,x),
		~~\sigma^{(k)}(t+\mathcal{T})=\sigma^{(k)}(t), \quad \forall (t,x)\in \mathbb{R}\times[\tilde{x}-\delta,L],\label{appPeriodic}\\
		&\|\breve{\bm{\mathcal{R}}}^{(k)}\|_{C^{1}}:= \max_{i=1,2}\left\{\|\breve{\mathcal{R}}_{i}^{(k)}\|, \|\partial_{t}\breve{\mathcal{R}}_{i}^{(k)}\|, \|\partial_{x}\breve{\mathcal{R}}_{i}^{(k)}\| \right\}<C_{P}\epsilon,\label{C1estima}\\
		&\|\sigma^{(k)}-\tilde{x}\|_{C^{1}} := \max\left\{\|\sigma^{(k)}-\tilde{x}\|, \|{\sigma^{(k)}}^{\prime}\|\right\}<C_{P}\epsilon\kappa^{-1},\label{appShockesti} \\
		&\|\breve{\bm{\mathcal{R}}}^{(k)}-\breve{\bm{\mathcal{R}}}^{(k-1)}\|\leq C_{P}\epsilon\theta^{k-1},\label{cauchyEstima1}\\
		&\|\sigma^{(k)}-\sigma^{(k-1)}\|_{C^{1}}\leq C_{\mathcal{F}}\epsilon\kappa^{-1}\theta^{k-1},\label{cauchyEstima2}
	\end{align}
	and
	\begin{align}
		\max\limits_{i=1,2}\left\{\varpi(\eta|\partial_{t}\breve{\mathcal{R}}_{i}^{(k)}(\cdot,\cdot))
		+\varpi(\eta|\partial_{x}\breve{\mathcal{R}}_{i}^{(k)}(\cdot,\cdot))\right\}<C_{M}\Phi(\eta). \label{modulus}
	\end{align}
	Here $\varpi$ is the modulus of continuity defined by
	\begin{equation*}
		\varpi(\eta|h(\cdot,\cdot)):=\sup\limits_{\substack{|t_{1}-t_{2}|\leq\eta\\|x_{1}-x_{2}|\leq\eta }}
		\left|h(t_{1},x_{1})-h(t_{2},x_{2})\right|,
	\end{equation*}
	for some function $h\in C_{t,x}^{0}$ and $\Phi(\eta)$ is a continuous function on $\eta \in (0,1)$, independent of $k$, to be determined later with
	\begin{equation*}
		\lim\limits_{\eta\to 0^{+}}\Phi(\eta)=0.
	\end{equation*}
\end{proposition}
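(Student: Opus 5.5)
We argue by induction on $k$. For $k=0$ the bounds are trivial by \eqref{start}. Assume \eqref{appPeriodic}--\eqref{modulus} hold at step $k-1$.

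\emph{Periodicity.} Periodicity of $\breve{\bm{\mathcal{R}}}^{(k)}$ comes from uniqueness of the characteristic solution of the linear problem \eqref{liearSys1}--\eqref{c9}. Its coefficients $\lambda_i^{(k-1)}$, its sources and its boundary data are all $\mathcal{T}$-periodic, by the induction hypothesis, \eqref{boundaryPeriodicity} and \Cref{existSuper}. Hence $\breve{\bm{\mathcal{R}}}^{(k)}(t+\mathcal{T},\cdot)$ solves the same problem and coincides with $\breve{\bm{\mathcal{R}}}^{(k)}$. Periodicity of $\sigma^{(k)}$ is built into \Cref{lemmaODE1}.

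\emph{$C^0$ bounds.} We integrate along characteristics.
\begin{itemize}[label={}]
\item For $\breve{\mathcal{R}}_1^{(k)}$, integrating back to $x=L$ gives $\|\breve{\mathcal{R}}_1^{(k)}\|\le \alpha_2^{-1}\|\breve{\mathcal{R}}_2^{(k-1)}\|+C_b\epsilon+C\kappa(C_P\epsilon+C\kappa^{-1}C_P\epsilon\cdot\kappa)$. The source is $O(\kappa)$ times the perturbation. The $\overline{E}^{(k-1)}$ term involves $\sigma^{(k-1)}-\tilde{x}$ only through an integral whose integrand is $O(\epsilon)$ near the shock, so it costs $O(\epsilon)$ rather than $O(\epsilon\kappa^{-1})$.
\item For $\breve{\mathcal{R}}_2^{(k)}$, integrating back to the shock and linearizing \eqref{c9} with Hadamard's formula as in \eqref{esti1} gives
\[
\|\breve{\mathcal{R}}_2^{(k)}\|\le \alpha_2\Big(\big|\partial_x\mathscr{A}\big|\,\|\sigma^{(k-1)}-\tilde{x}\|+\big|\partial_{\overline{\mathcal{R}}_1}\mathscr{A}\big|\,\|\breve{\mathcal{R}}_1^{(k-1)}\|\Big)+C\epsilon+C\kappa C_P\epsilon .
\]
\item We insert the bound \eqref{bb19} for $\sigma^{(k-1)}$, with $\Sigma=\mathcal{F}$. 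By \eqref{pFpx}, \eqref{pA3px} and \eqref{pFpU1}, the $\kappa$ in $|\partial_x\mathscr{A}|$ cancels the $\kappa^{-1}$ from $|\partial_x\mathcal{F}|^{-1}$. The combined coefficient is therefore $\alpha_2\tilde\theta_d\le\alpha_2\theta_d<1$ by \eqref{dissipationCon} and \eqref{beta}. Since $\alpha_2^{-1}<1$ as well, for $\kappa$ small the errors $O(\kappa)C_P\epsilon$ are absorbed.
\end{itemize}
This closes the bound with $C_P\epsilon$ for $C_P$ large relative to $C_b,C_l$. The bound $\|\sigma^{(k)}-\tilde x\|_{C^1}\le C_P\epsilon\kappa^{-1}$ then follows from \eqref{bb19}--\eqref{bb20}.

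\emph{Derivative bounds.} We differentiate the equations in $t$; for the $x$-derivative we use the equation itself to convert it into a $t$-derivative. We then propagate along characteristics with the same dissipative boundary structure. The shock condition is differentiated in $t$ along $x=\sigma^{(k-1)}(t)$, which brings in $\sigma^{(k-1)\prime}$ and $\sigma^{(k-1)\prime\prime}$. By \eqref{bb21}, $\|\sigma''\|$ is bounded by $\Sigma_{\sigma,0}\sim\kappa$ times $\epsilon\kappa^{-1}$ plus the norms of $\partial_t\varsigma$. The same cancellation therefore gives a coefficient below $1$. The modulus-of-continuity estimate \eqref{modulus} is obtained similarly, by comparing characteristic integrals; it needs the modulus of the boundary data, of $\overline{\rho}_l,\overline{u}_l$, and of $\sigma''$. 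It yields a recursive inequality $\varpi^{(k)}\le\theta'\varpi^{(k-1)}+C\Phi_0(\eta)$, which closes with $\Phi=C\Phi_0/(1-\theta')$.

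\emph{Contraction.} We subtract the schemes at steps $k$ and $k-1$. For $\breve{\bm{\mathcal{R}}}$ we use \eqref{bb33}--\eqref{bb34} for the shock difference. The shock boundaries differ, so the values of $\breve{\mathcal{R}}^{(k-1)}$ at $\sigma^{(k-1)}$ and $\sigma^{(k-2)}$ must be compared using the $C^1$ bound. This is the step I expect to be the main obstacle. The factor $\exp(\Sigma_{\sigma,0}\mathcal{T})=\exp(O(\kappa)\mathcal{T})$ in \eqref{bb33} has to be taken close to $1$ by the smallness of $\kappa$. The terms quadratic in differences, such as $\|\partial_x\breve{\mathcal R}\|\cdot\|\sigma^{(k-1)}-\sigma^{(k-2)}\|\sim C_P\epsilon\cdot\epsilon\kappa^{-1}\theta^{k-2}$, must be made small by choosing $\epsilon_1\ll\kappa$. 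With these choices the same dissipative coefficient $\alpha_2\theta_d$ gives \eqref{cauchyEstima1} and \eqref{cauchyEstima2} for some $\theta\in(\max\{\alpha_2\theta_d,\alpha_2^{-1}\},1)$.
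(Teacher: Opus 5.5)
Your proposal follows essentially the paper's argument. The shared steps are:
\begin{itemize}
\item induction, with periodicity from uniqueness of the linear problem;
\item $C^0$ bounds obtained by integrating along characteristics and linearizing the shock condition \eqref{c9} with Hadamard's formula;
\item the cancellation of the $\kappa$ in \eqref{pA3px} against the $\kappa^{-1}$ from $|\partial_x\mathcal{F}|^{-1}$ in \eqref{pFpx}, which yields the coefficient $\alpha_2\tilde\theta_d<1$;
\item a contraction step in which the mismatch of $\breve{\mathcal{R}}_2^{(k-1)}$ on $\sigma^{(k-1)}$ versus $\sigma^{(k-2)}$ is absorbed using the $C^1$ bound, $\exp(\Sigma_{\sigma,0}\mathcal{T})=1+O(\kappa)$, and $\epsilon\ll\kappa$.
\end{itemize}

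The differences are only technical. First, the paper propagates $D_\omega^{(k)}=\partial_t+\omega_2^{(k)}\partial_x$ rather than $\partial_t$. Here $\omega_2^{(k)}$ is the shock speed transported along 2-characteristics, so $D_\omega^{(k)}$ is exactly the tangential derivative on the shock and has a clean commutator; your plain $\partial_t$ version with conversion at the boundary also works. Second, your remark about $\sigma''$ is misplaced. The once-differentiated shock condition contains only $\sigma^{(k-1)\prime}$, which \eqref{bb20} bounds without any $\kappa^{-1}$. Hence $|\partial_x\mathscr{A}|\,\|\sigma^{(k-1)\prime}\|=O(\kappa\epsilon)$, and the coefficient below $1$ comes from the reflection term alone. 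The bound on $\sigma''$ from \eqref{bb21} is needed only in the modulus-of-continuity step.
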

Similarly as \cite{Qu}, once we show \Cref{propo}, one can directly see that \Cref{t1} holds. In fact, first, by \eqref{cauchyEstima1} and \eqref{cauchyEstima2}, the sequence $\{\breve{\bm{\mathcal{R}}}^{(k)}\}_{k=1}^{\infty}$ and $\{\sigma^{(k)}\}_{k=1}^{\infty}$ are Cauchy sequences in $C^{0}$ and $C^{1}$ spaces respectively, and thus converge to some $C^{0}$ function $\breve{\bm{\mathcal{R}}}^{(\mathcal{T})}$ and  $C^{1}$ function $\sigma^{(\mathcal{T})}$, respectively. By \eqref{appPeriodic}, the limits are $\mathcal{T}$--periodic. Moreover, by \eqref{C1estima} and \eqref{modulus}, applying the Arzel\`{a}--Ascoli theorem, there exists a subsequence of $\{\breve{\bm{\mathcal{R}}}^{(k)}\}$ that converges uniformly in $C^{1}$. By the uniqueness of the limit, one can conclude that $\{\breve{\bm{\mathcal{R}}}^{(k)}\}$ converges to $\breve{\bm{\mathcal{R}}}^{(\mathcal{T})}$ in $C^{1}$ space. Thus, with the shock position $\sigma^{(\mathcal{T})}$, one can see that $\breve{\bm{\mathcal{R}}}^{(\mathcal{T})}|_{\Omega_{+}^{\mathcal{T}}}$ is a $C^{1}$ classical solution to the system \eqref{sys57}--\eqref{shockposition} in the subsonic region defined by \eqref{regions}. Applying the variable substitution inversely, one obtains the solution of \eqref{a1} and \eqref{a5} in the subsonic region. As for the solution in the supersonic region, we take $(\rho_{-}^{(\mathcal{T})},u_{-}^{(\mathcal{T})},E_{-}^{(\mathcal{T})})|_{\Omega_{-}^{\mathcal{T}}}$, which is given by \Cref{existSuper}. The electric field is directly recovered by \eqref{barEle}. Moreover, one can recover the R--H conditions, equivalently \eqref{RHcon} and \eqref{speedEx}, by passing to the limit in \eqref{c9}, \eqref{c11}. And the Lax entropy condition follows. Finally, we take the initial data just in the form of \eqref{a14}, and then the proof of \Cref{t1} is complete.
\par The proof of \Cref{propo} is quite lengthy. We will divide it into three subsections.
\subsection{Uniform \texorpdfstring{$C^1$}{C1} boundedness and time--periodicity}\label{suu1}
\begin{proof}[Proof of \eqref{appPeriodic}--\eqref{appShockesti}]
\par For $k\ge 1$, with $\breve{\bm{\mathcal{R}}}^{(k-1)}$ and $\sigma^{(k-1)}$ in hand, we will inductively prove the following
\begin{align}
	&\breve{\bm{\mathcal{R}}}^{(k)}(t+\mathcal{T},x)=\breve{\bm{\mathcal{R}}}^{(k)}(t,x),
	~~\sigma^{(k)}(t+\mathcal{T})=\sigma^{(k)}(t), \quad \forall (t,x)\in \mathbb{R}\times[\tilde{x}-\delta,L], \label{c17}\\
	&\|\breve{\bm{\mathcal{R}}}^{(k)}\|<C_{1}\epsilon,\quad \|\partial_{t}\breve{\bm{\mathcal{R}}}^{(k)}\|<C_{1}\epsilon, \quad \|\partial_{x}\breve{\bm{\mathcal{R}}}^{(k)}\|<C_{2}\epsilon,\label{c18}\\
	&\|\sigma^{(k)}-\tilde{x}\|<C_{\mathcal{F},0}\epsilon\kappa^{-1},\quad \|{\sigma^{(k)}}^{\prime}\|<C_{\mathcal{F},1}\epsilon,\label{c19}
\end{align}
under the assumption 
\begin{align}
	&\breve{\bm{\mathcal{R}}}^{(k-1)}(t+\mathcal{T},x)=\breve{\bm{\mathcal{R}}}^{(k-1)}(t,x),
	~~\sigma^{(k-1)}(t+\mathcal{T})=\sigma^{(k-1)}(t), \quad \forall (t,x)\in \mathbb{R}\times[\tilde{x}-\delta,L], \label{c20}\\
	&\|\breve{\bm{\mathcal{R}}}^{(k-1)}\|<C_{1}\epsilon,\quad \|\partial_{t}\breve{\bm{\mathcal{R}}}^{(k-1)}\|<C_{1}\epsilon, \quad \|\partial_{x}\breve{\bm{\mathcal{R}}}^{(k-1)}\|<C_{2}\epsilon,\label{c21}\\
	&\|\sigma^{(k-1)}-\tilde{x}\|<C_{\mathcal{F},0}\epsilon\kappa^{-1},\quad \|{\sigma^{(k-1)}}^{\prime}\|<C_{\mathcal{F},1}\epsilon,\label{c22}
\end{align}
where $C_{1},C_{2}, C_{\mathcal{F},0},C_{\mathcal{F},1}$ are positive constants to be determined later. In view of \eqref{c21}, we have $\breve{\bm{\mathcal{R}}}^{(k-1)} \in \mathcal{U}$ for some uniform small neighborhood $\mathcal{U}$, which guarantees that we only need to consider small perturbations. To start with, for $k=1$, \eqref{c20}--\eqref{c22} follow directly from \eqref{start}. 
\par First of all, by \eqref{c20}, one can easily see that $\breve{\bm{\mathcal{R}}}^{(k)}(t+\mathcal{T},x)$ solves equations \eqref{liearSys1}--\eqref{c9} if $\breve{\bm{\mathcal{R}}}^{(k)}(t,x)$ does. Thus, by uniqueness for this linear system, one has 
\begin{equation*}
	\breve{\bm{\mathcal{R}}}^{(k)}(t+\mathcal{T},x)=\breve{\bm{\mathcal{R}}}^{(k)}(t,x),
	\quad \forall (t,x)\in \mathbb{R}\times[\tilde{x}-\delta,L].
\end{equation*}
\par Then, we present the estimates of the 2--wave $\breve{\mathcal{R}}_{2}^{(k)}$ in detail, which is the most essential part. For any fixed $(t,x) \in \mathbb{R}\times[\tilde{x}-\delta, L]$, one can define the 2--characteristics as
\begin{align}\label{characteritic}
\left\{
\begin{aligned}
&\frac{dX_{2}^{(k)}}{d\tau}(\tau;t,x)=\lambda_{2}\left(X_{2}^{(k)}(\tau;t,x),\breve{\bm{\mathcal{R}}}^{(k-1)}\left(\tau,X_{2}^{(k)}(\tau;t,x)\right)\right),\\
&X_{2}^{(k)}(t;t,x)=x,
\end{aligned}
\right.
\end{align}
and set the backward exit--time for $(t,x)$ as $\tau_{2}^{(k)}(t,x)$ by the intersection of the characteristic $X_{2}^{(k)}$ and the shock $\sigma^{(k-1)}$, i.e.,
\begin{equation}\label{exitTime}
	X_{2}^{(k)}(\tau_{2}^{(k)};t,x)=\sigma^{(k-1)}(\tau_{2}^{(k)}).
\end{equation}
Due to the different propagation speeds, the intersection is uniquely well--defined. One can integrate \eqref{liearSys2} along the characteristic curve $X_{2}^{(k)}=X_{2}^{(k)}(\tau;t,x)$ to get
\begin{multline}
	\breve{\mathcal{R}}_{2}^{(k)}(t,x)=\breve{\mathcal{R}}_{2}^{(k)}\left(\tau_{2}^{(k)},\sigma^{(k-1)}(\tau_{2}^{(k)})\right) \\
	+\int_{\tau_{2}^{(k)}}^{t}\left(\alpha_{2}\kappa \overline{E}^{(k-1)} +\alpha_{2}g_{2}(x, \breve{\bm{\mathcal{R}}}^{(k-1)})\right)\left(\tau,X_{2}^{(k)}(\tau;t,x)\right)d\tau.
\end{multline}
From \eqref{Ex} and \eqref{appEle}, one can see
\begin{equation*}
	\|\overline{E}^{(k-1)}\| \le \|\dfrac{\tilde{\rho}}{\tilde{c}}\|LC_{1}\epsilon + C\epsilon^{2}.
\end{equation*}
And then by \eqref{a11}, \eqref{c9} and applying Hadamard's formula, one has
\begin{align}
\left|\breve{\mathcal{R}}_{2}^{(k)}(t,x)\right|
\leq&\left|\mathscr{A}^{(k-1)}\left[\tau_{2}^{(k)},\sigma^{(k-1)}(\tau_{2}^{(k)})\right]\right|
+C\kappa\epsilon +CC_{1}\epsilon^{2} \notag\\
\leq& \alpha_{2}(1+C\epsilon +C\kappa)\left(|\frac{\partial\mathscr{A}}{\partial x}(\tilde{x},\bm{0})|\|{\sigma^{(k-1)}}-\tilde{x}\| +|\frac{\partial\mathscr{A}}{\partial \overline{\mathcal{R}}_{1}}(\tilde{x},\bm{0})|\|\overline{\mathcal{R}}_{1}^{(k-1)}\| \right.\notag\\
&\left. +|\frac{\partial\mathscr{A}}{\partial \overline{\rho}_{l}}(\tilde{x},\bm{0})|\|\overline{\rho}_{l}\| +|\frac{\partial\mathscr{A}}{\partial \overline{u}_{l}}(\tilde{x},\bm{0})|\|\overline{u}_{l}\| \right)
+C\kappa\epsilon +C\epsilon^{2}.\label{c36}
\end{align}
Here and below $C$ denotes different constants independent of $k$. Furthermore, applying \Cref{lemmaODE1}, one has
\begin{multline}\label{chiEsti}
	\|\sigma^{(k-1)}-\tilde{x}\|\leq(1+C\epsilon)\frac{1}{|\frac{\partial \mathcal{F}}{\partial x}(\tilde{x},\bm{0})|}\left(\sum_{i=1}^{2}\dfrac{1}{\alpha_{i}}|\frac{\partial \mathcal{F}}{\partial \overline{\mathcal{R}}_{i}}(\tilde{x},\bm{0})|\|\breve{\mathcal{R}}_{i}^{(k-1)}\| \right. \\
	\left. +|\frac{\partial \mathcal{F}}{\partial \overline{\rho}_{l}}(\tilde{x},\bm{0})|\|\overline{\rho}_{l}\|
	+|\frac{\partial \mathcal{F}}{\partial \overline{u}_{l}}(\tilde{x},\bm{0})|\|\overline{u}_{l}\|\right).
\end{multline}
Therefore, applying the structural dissipation \eqref{dissipationCon}, one can get
\begin{equation}\label{C0estiFor3}
	\left|\breve{\mathcal{R}}_{2}^{(k)}(t,x)\right|
	\leq (1+C\epsilon +C\kappa)\alpha_{2}\theta_{d}C_{1}\epsilon + CC_{l}\epsilon +C\kappa\epsilon +C\epsilon^{2}.
\end{equation}
Thus, by \eqref{beta}, for large enough $C_{1}$, one has
\begin{equation}
	\|\breve{\mathcal{R}}_{2}^{(k)}\| \le C_{1}\epsilon.
\end{equation}
The dissipation for $1$--waves at the outlet boundary is guaranteed by $\alpha_{2} >1$. The analysis is similar, so we omit here and in subsequent part of this paper.
\par Next, we turn to the estimates for derivatives. We need to estimate the derivative in the region using the derivative along the boundary $\sigma^{(k-1)}(t)$. To this end, we define function $\omega^{(k)}_{2}(t,x)$ as
\begin{equation}
	\omega_{2}^{(k)}\left(t,x\right)=\mathcal{F}^{(k-1)}\left(\tau_{2}^{(k)}(t,x),\sigma^{(k-1)}(\tau_{2}^{(k)}(t,x))\right),
\end{equation}
which is well--defined following the definition of the backward exit--time. One can easily see that $\omega_{2}^{(k)}$ remains invariant along the characteristics, namely, it satisfies
\begin{equation}\label{omegaInvariant}
	\frac{\partial \omega_{2}^{(k)}}{\partial t} + \lambda_{2}^{(k-1)}\frac{\partial \omega_{2}^{(k)}}{\partial x} = 0.
\end{equation} Moreover, differentiating the boundary condition \eqref{c9} with respect to $t$ yields
\begin{equation}\label{tDerivation}
	\frac{d}{dt}\breve{\mathcal{R}}_{2}^{(k)}(t,\sigma^{(k-1)}(t)) = \left(\frac{\partial}{\partial t}+\mathcal{F}^{(k-1)}(t,\sigma^{(k-1)}(t))\frac{\partial}{\partial x}\right)\breve{\mathcal{R}}_{2}^{(k)}(t,\sigma^{(k-1)}(t)).
\end{equation}
We define the differential operator 
\begin{equation}\label{dOmega}
	D_{\omega}^{(k)}:=\frac{\partial}{\partial t}+\omega_{2}^{(k)}(t,x)\frac{\partial}{\partial x}
\end{equation}
and apply it to the equation \eqref{liearSys2}
\begin{align}
	&\left(\partial_{t}+\lambda_{2}^{(k-1)}\partial_{x}\right)D_{\omega}^{(k)}\breve{\mathcal{R}}_{2}^{(k)}\notag\\
	=&-D_{2}^{(k)}\lambda_{2}^{(k-1)}\partial_{x}\breve{\mathcal{R}}_{2}^{(k)} + \kappa\alpha_{2}D_{\omega}^{(k)}\overline{E}^{(k-1)} + \alpha_{2}D_{2}^{(k)}g_{2} \label{deriTrans}
\end{align}
where we use the operator notation for some function $f=f(x,\breve{\bm{\mathcal{R}}}^{(k-1)})$:
\begin{equation}\label{D3Forshort}
	D_{2}^{(k)} := \sum_{i=1}^{2}\left(\frac{\partial\breve{\mathcal{R}}_{i}^{(k-1)}}{\partial t}\frac{\partial}{\partial \breve{\mathcal{R}}_{i}} + \omega_{2}^{(k)}\frac{\partial\breve{\mathcal{R}}_{i}^{(k-1)}}{\partial x}\frac{\partial}{\partial \breve{\mathcal{R}}_{i}}\right) + \omega_{2}^{(k)}\frac{\partial}{\partial x}
\end{equation}
for simplicity. Then, one can integrate \eqref{deriTrans} along the characteristic curve $X_{2}^{(k)}=X_{2}^{(k)}(\tau;t,x)$ and use \eqref{appEle}, \eqref{tDerivation} and \eqref{c21}--\eqref{c22} to get
\begin{align}
	&\left|D_{\omega}^{(k)}\breve{\mathcal{R}}_{2}^{(k)}(t,x)\right| \notag\\
	\le& \left|D_{\omega}^{(k)}\breve{\mathcal{R}}_{2}^{(k)}\left(\tau_{2}^{(k)},\sigma^{(k-1)}(\tau_{2}^{(k)})\right)\right| +C\epsilon\|\partial_{x}\breve{\mathcal{R}}_{2}^{(k)}\|+C\epsilon^{2}+C\kappa\epsilon\notag\\
	\le& \alpha_{2}(1+C\epsilon +C\kappa)\left(|\frac{\partial\mathscr{A}}{\partial x}(\tilde{x},\bm{0})|\|{\sigma^{(k-1)}}^{\prime}\| +|\frac{\partial\mathscr{A}}{\partial \overline{\mathcal{R}}_{1}}(\tilde{x},\bm{0})|\left(\|\partial_{t}\overline{\mathcal{R}}_{1}^{(k-1)}\|+\|{\sigma^{(k-1)}}^{\prime}\|\|\partial_{x}\overline{\mathcal{R}}_{1}^{(k-1)}\|\right) \right)\notag\\
	&+CC_{l}\epsilon +C\epsilon\|\partial_{x}\breve{\mathcal{R}}_{2}^{(k)}\|
	+C\kappa\epsilon +C\epsilon^{2}. \label{c42}
\end{align}
Again applying \Cref{lemmaODE1}, one can get
\begin{equation}\label{chiPrimeEsti}
	\|{\sigma^{(k-1)}}^{\prime}\|\leq
	(2+C\epsilon)\left(\sum_{i=1}^{2}\dfrac{1}{\alpha_{i}}|\frac{\partial \mathcal{F}}{\partial \overline{\mathcal{R}}_{i}}(\tilde{x},\bm{0})|\|\breve{\mathcal{R}}_{i}^{(k-1)}\| +|\frac{\partial \mathcal{F}}{\partial \overline{\rho}_{l}}(\tilde{x},\bm{0})|\|\overline{\rho}_{l}\|
	+|\frac{\partial \mathcal{F}}{\partial \overline{u}_{l}}(\tilde{x},\bm{0})|\|\overline{u}_{l}\|\right).
\end{equation}
Thus, combining \eqref{pA3px},\eqref{c10}, \eqref{c21} and \eqref{c22}, one has
\begin{equation}\label{c43}
	\|\partial_{t}\breve{\mathcal{R}}_{2}^{(k)}\| - C\epsilon\|\partial_{x}\breve{\mathcal{R}}_{2}^{(k)}\| \le \alpha_{2}(1+C\epsilon +C\kappa)\left(C\kappa\epsilon+\theta_{d}C_{1}\epsilon\right)+CC_{l}\epsilon 
	+C\kappa\epsilon +C\epsilon^{2}.
\end{equation}
Using the linear equation \eqref{liearSys2}, one can directly see
\begin{align}
\|\partial_{x}\breve{\mathcal{R}}_{2}^{(k)}\|\leq\mu_{\max}\|\partial_{t}\breve{\mathcal{R}}_{2}^{(k)}\|+C\kappa\epsilon+C\epsilon^{2},\label{c44}
\end{align}
where
\begin{equation}\label{mumax}
	\mu_{\max}=\max\limits_{\substack{i=1,2\\\breve{\bm{\mathcal{R}}}\in \mathcal{U}}}\frac{1}{\left|\lambda_{i}\left(x,\breve{\bm{\mathcal{R}}}\right)\right|}.
\end{equation}
Combining \eqref{c43} and \eqref{c44}, one has
\begin{align}
&\|\partial_{t}\breve{\mathcal{R}}_{2}^{(k)}\|<C_{1}\epsilon,\quad \|\partial_{x}\breve{\mathcal{R}}_{2}^{(k)}\|<C_{2}\epsilon.\label{c46}
\end{align}
for large enough $C_{1}$ and $C_{2}$.
\par Finally, with \eqref{c18} in hand, one can see that \eqref{c19} follows just as in \eqref{chiEsti} and \eqref{chiPrimeEsti}, with the index replaced by $k$. Furthermore, the $\mathcal{T}$--periodicity of $\sigma^{(k)}$ is guaranteed by \Cref{lemmaODE1}. And so far, we have proved \eqref{c17}--\eqref{c19}, and thus \eqref{appPeriodic}--\eqref{appShockesti}.
\end{proof}

\subsection{Cauchy properties}\label{suu2}
\begin{proof}[Proof of \eqref{cauchyEstima1} and \eqref{cauchyEstima2}]
For $k \ge 2$, we will inductively prove
\begin{align}
	&\|\breve{\bm{\mathcal{R}}}^{(k)}-\breve{\bm{\mathcal{R}}}^{(k-1)}\|\leq C_{1}\epsilon\theta^{k-1}, \label{c52}\\
	&\|\sigma^{(k)}-\sigma^{(k-1)}\|\leq C_{\mathcal{F}}\epsilon\kappa^{-1}\theta^{k-1}, \quad \|{\sigma^{(k)}}^{\prime}-{\sigma^{(k-1)}}^{\prime}\| \leq C_{\mathcal{F}}\epsilon\theta^{k-1},\label{c52.1}
\end{align}
under the assumption \eqref{appPeriodic}--\eqref{appShockesti} and
\begin{align}
&\|\breve{\bm{\mathcal{R}}}^{(k-1)}-\breve{\bm{\mathcal{R}}}^{(k-2)}\|\leq C_{1}\epsilon\theta^{k-2}, \label{c53}\\
&\|\sigma^{(k-1)}-\sigma^{(k-2)}\|\leq C_{\mathcal{F}}\epsilon\kappa^{-1}\theta^{k-2}, \quad \|{\sigma^{(k-1)}}^{\prime}-{\sigma^{(k-2)}}^{\prime}\|\leq C_{\mathcal{F}}\epsilon\theta^{k-2},\label{c53.1}
\end{align}
for some $\theta \in (0,1)$ and $C_{\mathcal{F}}>0$ to be determined later. To start with, for $k=2$, \eqref{c53}--\eqref{c53.1} follows directly from \eqref{start} and \eqref{c18}--\eqref{c19}.
\par For $k \ge 2$, the equation \eqref{liearSys2} yields
\begin{align}
	&\left(\partial_{t}+\lambda_{2}^{(k-1)}\partial_{x}\right)\left(\breve{\mathcal{R}}_{2}^{(k)}- \breve{\mathcal{R}}_{2}^{(k-1)}\right) \notag \\
	=& - \left(\lambda_{2}^{(k-1)}-\lambda_{2}^{(k-2)}\right)\partial_{x}\breve{\mathcal{R}}_{2}^{(k-1)} +\alpha_{2}\left(g_{2}(x,\breve{\bm{\mathcal{R}}}^{(k-1)})-g_{2}(x,\breve{\bm{\mathcal{R}}}^{(k-2)})\right) \notag \\
	&+ \kappa\int_{\sigma^{(k-2)}(t)}^{\sigma^{(k-1)}(t)}\left(\rho_{l}(t, y) - \rho(y, \breve{\bm{\mathcal{R}}}^{(k-2)}(t,y))\right)dy \notag\\
	&+ \kappa\int_{\sigma^{(k-1)}(t)}^{x}\left(\rho(y, \breve{\bm{\mathcal{R}}}^{(k-1)}(t,y)) - \rho(y, \breve{\bm{\mathcal{R}}}^{(k-2)}(t,y))\right)dy. \label{c54}
\end{align}
Then for any fixed $(t,x) \in \mathbb{R}\times[\tilde{x}-\delta, L]$, one can integrate \eqref{c54} along the characteristic $X_{2}^{(k)}$ from $\tau_{2}^{(k)}(t,x)$ to $t$ and apply Hadamard's formula to get
\begin{align}
	&\left|\breve{\mathcal{R}}_{2}^{(k)}(t,x)-\breve{\mathcal{R}}_{2}^{(k-1)}(t,x)\right| \notag \\
	\le & \left|\breve{\mathcal{R}}_{2}^{(k)}\left(\tau_{2}^{(k)},\sigma^{(k-1)}(\tau_{2}^{(k)})\right)
	-\breve{\mathcal{R}}_{2}^{(k-1)}\left(\tau_{2}^{(k)},\sigma^{(k-2)}(\tau_{2}^{(k)})\right)\right|  \notag \\
	&+ \left|\breve{\mathcal{R}}_{2}^{(k-1)}\left(\tau_{2}^{(k)},\sigma^{(k-2)}(\tau_{2}^{(k)})\right)
	-\breve{\mathcal{R}}_{2}^{(k-1)}\left(\tau_{2}^{(k)},\sigma^{(k-1)}(\tau_{2}^{(k)})\right)\right| + \left(C\epsilon+C\kappa\right)\|\breve{\bm{\mathcal{R}}}^{(k-1)}-\breve{\bm{\mathcal{R}}}^{(k-2)}\| \notag \\
	\le & \alpha_{2}(1+C\epsilon +C\kappa)\left(|\frac{\partial\mathscr{A}}{\partial x}(\tilde{x},\bm{0})|\|\sigma^{(k-1)}-\sigma^{(k-2)}\| +|\frac{\partial\mathscr{A}}{\partial \overline{\mathcal{R}}_{1}}(\tilde{x},\bm{0})|\|\overline{\mathcal{R}}_{1}^{(k-1)}-\overline{\mathcal{R}}_{1}^{(k-2)}\| \right)\notag\\
	&+ (CC_{l}+ C_{1})\epsilon\|\sigma^{(k-1)}-\sigma^{(k-2)}\| +\left(C\epsilon+C\kappa\right)\|\breve{\bm{\mathcal{R}}}^{(k-1)}-\breve{\bm{\mathcal{R}}}^{(k-2)}\|. \label{c55}
\end{align}
Applying \Cref{lemmaODE2} to equation \eqref{c11} with number $k-1$ and $k-2$, one can get
\begin{multline}\label{c56}
	\|\sigma^{(k-1)}-\sigma^{(k-2)}\|\leq(1+C\epsilon)\frac{\exp(|\frac{\partial \mathcal{F}}{\partial x}(\tilde{x},\bm{0})|\mathcal{T})}{|\frac{\partial \mathcal{F}}{\partial x}(\tilde{x},\bm{0})|}\left(\sum_{i=1}^{2}\dfrac{1}{\alpha_{i}}|\frac{\partial \mathcal{F}}{\partial \overline{\mathcal{R}}_{i}}(\tilde{x},\bm{0})|\|\breve{\mathcal{R}}_{i}^{(k-1)}-\breve{\mathcal{R}}_{i}^{(k-2)}\| \right. \\
	\left. +\left(|\frac{\partial \mathcal{F}}{\partial \overline{\rho}_{l}}(\tilde{x},\bm{0})|
	+|\frac{\partial \mathcal{F}}{\partial \overline{u}_{l}}(\tilde{x},\bm{0})|\right)C_{l}\epsilon\|\sigma^{(k-1)}-\sigma^{(k-2)}\|\right).
\end{multline}
By \eqref{pFpx}, one has
\begin{equation*}
	\exp\left(\left|\frac{\partial \mathcal{F}}{\partial x}(\tilde{x},\bm{0})\right|\mathcal{T}\right) = 1+O(1)\kappa,
\end{equation*}
and thus, combining \eqref{c55}, \eqref{c56} and \eqref{dissipationCon},
\begin{multline}\label{cauchy}
	\left|\breve{\mathcal{R}}_{2}^{(k)}(t,x)-\breve{\mathcal{R}}_{2}^{(k-1)}(t,x)\right| \\
	\le (1+C\epsilon +C\kappa +\frac{C\epsilon}{\kappa})\alpha_{2}\theta_{d} \|\breve{\bm{\mathcal{R}}}^{(k-1)}-\breve{\bm{\mathcal{R}}}^{(k-2)}\| +\left(C\epsilon+C\kappa\right)\|\breve{\bm{\mathcal{R}}}^{(k-1)}-\breve{\bm{\mathcal{R}}}^{(k-2)}\|.
\end{multline}
For small enough $\epsilon$, one can find suitable $\theta$ such that
\begin{equation*}
	(1+C\epsilon +C\kappa+\frac{C\epsilon}{\kappa})\alpha_{2}\theta_{d} + C\epsilon +C \kappa < \theta < 1.
\end{equation*}
Then with \eqref{c53} in hand, one has
\begin{align}
\left|\breve{\mathcal{R}}_{2}^{(k)}(t,x)-\breve{\mathcal{R}}_{2}^{(k-1)}(t,x)\right|\leq C_{1}\epsilon\theta^{k-1}.\label{c61}
\end{align}
The analysis for $\breve{\mathcal{R}}_{1}^{(k)}$ is similar. With \eqref{c52} in hand, \eqref{c52.1} follows for sufficiently large $C_{\mathcal{F}}$ by applying \Cref{lemmaODE2}. This completes the proof of \eqref{cauchyEstima1}--\eqref{cauchyEstima2}.
\end{proof}

\subsection{Estimates of continuity modulus}\label{suu3}
\begin{proof}[Proof of \eqref{modulus}]
To estimate the modulus of continuity as in \eqref{modulus}, we will prove for $k\ge 1$,
\begin{align}
&\mathop{\max}\limits_{i=1,2}\varpi(\eta|\partial_{t}\breve{\mathcal{R}}_{i}^{(k)}(\cdot,x))<C_{M,1}\Phi(\eta), \quad \mathop{\max}\limits_{i=1,2}\varpi(\eta|\partial_{x}\breve{\mathcal{R}}_{i}^{(k)}(\cdot,x))<C_{M,2}\Phi(\eta),\label{c63}\\
&\max\limits_{i=1,2}\left\{\varpi(\eta|\partial_{t}\breve{\mathcal{R}}_{i}^{(k)}(\cdot,\cdot))
+\varpi(\eta|\partial_{x}\breve{\mathcal{R}}_{i}^{(k)}(\cdot,\cdot))\right\}<C_{M}\Phi(\eta),\label{c64}
\end{align}
under the assumption \eqref{appPeriodic}--\eqref{cauchyEstima2} and
\begin{align}
	&\mathop{\max}\limits_{i=1,2}\varpi(\eta|\partial_{t}\breve{\mathcal{R}}_{i}^{(k-1)}(\cdot,x))<C_{M,1}\Phi(\eta), \quad \mathop{\max}\limits_{i=1,2}\varpi(\eta|\partial_{x}\breve{\mathcal{R}}_{i}^{(k-1)}(\cdot,x))<C_{M,2}\Phi(\eta),\label{c65}\\
	&\max\limits_{i=1,2}\left\{\varpi(\eta|\partial_{t}\breve{\mathcal{R}}_{i}^{(k-1)}(\cdot,\cdot))
	+\varpi(\eta|\partial_{x}\breve{\mathcal{R}}_{i}^{(k-1)}(\cdot,\cdot))\right\}<C_{M}\Phi(\eta),\label{c66}
\end{align}
where
\begin{equation*}
	\varpi(\eta|h(\cdot,x))=\mathop{\sup}\limits_{|t_{1}-t_{2}|\leq\eta}|h(t_{1},x)-h(t_{2},x)|,
\end{equation*}
and $C_{M,1},C_{M,2}$ and $C_{M}$ are constants to be determined later. Moreover, we define the modulus function by
\begin{align}
\Phi(\eta)=&\eta+\varpi(\eta|\overline{\mathcal{R}}_{1,b+}^{\prime})+\varpi(\eta|\frac{\partial\overline{\rho}_{l}}{\partial t}(\cdot,\cdot))
+\varpi(\eta|\frac{\partial\overline{\rho}_{l}}{\partial x}(\cdot,\cdot))+\varpi(\eta|\frac{\partial\overline{u}_{l}}{\partial t}(\cdot,\cdot))+\varpi(\eta|\frac{\partial\overline{u}_{l}}{\partial x}(\cdot,\cdot)),\label{c67}
\end{align}
where for $h \in C_{t}^{1}$
\begin{equation*}
	\varpi(\eta|h) = \sup_{|t_1 - t_2| \leq \eta} \left|h(t_1) - h(t_2)\right|.
\end{equation*}
By \eqref{boundarySmall} and \eqref{c10}, one has $\lim_{\eta \to 0+} \Phi(\eta) = 0$. To start with, \eqref{start} yields \eqref{c65}--\eqref{c66} for $k=1$ directly. In the rest of this subsection, we will prove \eqref{c63}--\eqref{c64} in four steps.
\begin{description}[leftmargin=0cm]
	\item[Step 1. Estimates for two points on the shock boundary]  For given $t_{1},t_{2}$ with $\left|t_{1}-t_{2}\right| \le \eta$, by taking the time derivative on the boundary condition \eqref{c9}, one has
	\begin{align}
		&\frac{1}{\alpha_{2}}\left(\frac{d}{dt}\breve{\mathcal{R}}_{2}^{(k)}(t_{2},\sigma^{(k-1)}(t_{2})) -  \frac{d}{dt}\breve{\mathcal{R}}_{2}^{(k)}(t_{1},\sigma^{(k-1)}(t_{1}))\right) \notag \\
		=& \left({\sigma^{(k-1)}}^{\prime}(t_{2})- {\sigma^{(k-1)}}^{\prime}(t_{1})\right)\frac{\partial \mathscr{A}^{(k-1)}}{\partial x}\left[t_{2},\sigma^{(k-1)}(t_{2})\right] \notag\\
		&+ {\sigma^{(k-1)}}^{\prime}(t_{1})\left(\frac{\partial \mathscr{A}^{(k-1)}}{\partial x}\left[t_{2},\sigma^{(k-1)}(t_{2})\right]-\frac{\partial \mathscr{A}^{(k-1)}}{\partial x}\left[t_{1},\sigma^{(k-1)}(t_{1})\right]\right) \notag \\
		&+ \left(\frac{d}{d t}\overline{\mathcal{R}}_{1}^{(k-1)}(t_{2},\sigma^{(k-1)}(t_{2}))-\frac{d}{d t}\overline{\mathcal{R}}_{1}^{(k-1)}(t_{1},\sigma^{(k-1)}(t_{1}))\right)\frac{\partial \mathscr{A}^{(k-1)}}{\partial \overline{\mathcal{R}}_{1}}\left[t_{2},\sigma^{(k-1)}(t_{2})\right] \notag \\
		&+ \frac{d}{d t}\overline{\mathcal{R}}_{1}^{(k-1)}(t_{1},\sigma^{(k-1)}(t_{1}))\left(\frac{\partial \mathscr{A}^{(k-1)}}{\partial \overline{\mathcal{R}}_{1}}\left[t_{2},\sigma^{(k-1)}(t_{2})\right] - \frac{\partial \mathscr{A}^{(k-1)}}{\partial \overline{\mathcal{R}}_{1}}\left[t_{1},\sigma^{(k-1)}(t_{1})\right]\right) \notag \\
		& + \sum_{i=1}^{2}\left(\frac{d}{d t}\overline{U}_{i,l}^{(k-1)}(t_{2},\sigma^{(k-1)}(t_{2}))-\frac{d}{d t}\overline{U}_{i,l}^{(k-1)}(t_{1},\sigma^{(k-1)}(t_{1}))\right)\frac{\partial \mathscr{A}^{(k-1)}}{\partial \overline{U}_{i,l}}\left[t_{2},\sigma^{(k-1)}(t_{2})\right] \notag \\
		&+ \sum_{i=1}^{2} \frac{d}{d t}\overline{U}_{i,l}^{(k-1)}(t_{1},\sigma^{(k-1)}(t_{1}))\left(\frac{\partial \mathscr{A}^{(k-1)}}{\partial \overline{U}_{i,l}}\left[t_{2},\sigma^{(k-1)}(t_{2})\right] - \frac{\partial \mathscr{A}^{(k-1)}}{\partial \overline{U}_{i,l}}\left[t_{1},\sigma^{(k-1)}(t_{1})\right]\right),
	\end{align}
	where we denote $(\overline{U}_{1,l},\overline{U}_{2,l}) =(\overline{\rho}_{l}, \overline{u}_{l})$ for simplicity. By \eqref{bb21}, \eqref{C1estima}, \eqref{c19} and \eqref{c65}, one has
	\begin{multline}\label{modulusDissipation}
		\left|\frac{d}{dt}\breve{\mathcal{R}}_{2}^{(k)}(t_{2},\sigma^{(k-1)}(t_{2})) -  \frac{d}{dt}\breve{\mathcal{R}}_{2}^{(k)}(t_{1},\sigma^{(k-1)}(t_{1}))\right| \le \left(C\kappa C_{P}\epsilon\eta + CC_{P}\epsilon\eta\right)\kappa+CC_{\mathcal{F},1}\epsilon^{2}\kappa\eta \\
		+\alpha_{2}\theta_{d}\left(C_{M,1}+C_{\mathcal{F},1}\epsilon C_{M,2}\right)\Phi(\eta)+C(1+C_{l}C_{1}\epsilon^{2})\Phi(\eta).
	\end{multline}
	This equation gives the dissipation structure of the continuity modulus at the shock boundary. It will help us estimate the modulus in the form
	\begin{equation}
		\left|D_{\omega}^{(k)}\breve{\mathcal{R}}_{2}^{(k)}(t_{2},x) - D_{\omega}^{(k)}\breve{\mathcal{R}}_{2}^{(k)}(t_{1},x)\right|
	\end{equation}
	with the differential operator defined by \eqref{dOmega}.
	
	\item[Step 2. Uniform $\|\omega_{2}^{(k)}\|_{C^{1}}$ estimates] We need to estimate the $C^{1}$ norm of $\omega_{2}^{(k)} = \omega_{2}^{(k)}(t,x)$. Applying the differential operator $D_{\omega}^{(k)}$ to \eqref{omegaInvariant} yields
	\begin{equation}
		\frac{\partial}{\partial t} D_{\omega}^{(k)}\omega_{2}^{(k)} + \lambda_{2}^{(k-1)}\frac{\partial}{\partial x} D_{\omega}^{(k)}\omega_{2}^{(k)} = -D_{2}^{(k)}\lambda_{2}^{(k-1)}\frac{\partial \omega_{2}^{(k)}}{\partial x}.
	\end{equation}
	Integrating it along the characteristic \eqref{characteritic} and using \eqref{bb21}, one can obtain
	\begin{align}
		&\left|D_{\omega}^{(k)}\omega_{2}^{(k)}(t,x)\right| 
		\le \left|D_{\omega}^{(k)}\omega_{2}^{(k)}\left(\tau_{2}^{(k)},\sigma^{(k-1)}(\tau_{2}^{(k)})\right)\right| +C_{\mathcal{F},1}\epsilon\|\partial_{x}\omega_{2}^{(k)}\| \notag\\
		\le& (1+C\epsilon +C\kappa)\left(\sum_{i=1}^{2}\dfrac{1}{\alpha_{i}}|\frac{\partial \mathcal{F}}{\partial \overline{\mathcal{R}}_{i}}(\tilde{x},\bm{0})|\|\partial_{t}\breve{\mathcal{R}}_{i}^{(k-1)}\| \right) +CC_{l}\epsilon +C\epsilon\|\partial_{x}\omega_{2}^{(k)}\|
		+CC_{1}\kappa\epsilon. \label{c72}
	\end{align}
	Then, one can use an argument similar to that for \eqref{c42}--\eqref{c46} to get
	\begin{align}
		\|\frac{\partial\omega_{2}^{(k)}}{\partial x}\|\leq\mu_{\max}\|\frac{\partial\omega_{2}^{(k)}}{\partial t}\|, \label{c73}
	\end{align}
	and thus obtain from \eqref{c72} that
	\begin{equation}\label{omegaC1}
		\|\frac{\partial\omega_{2}^{(k)}}{\partial t}\|<C_{\mathcal{F},2}\epsilon,\quad \|\frac{\partial\omega_{2}^{(k)}}{\partial x}\|\leq\mu_{\max}C_{\mathcal{F},2}\epsilon
	\end{equation}
	for some suitable large constant $C_{\mathcal{F},2}$ independent of $k$.
	
	\item[Step 3. Estimates in the region by the method of characteristics] To simplify the analysis, we set $\mu_{i} = (\lambda_{i})^{-1}, \mu_{i}^{(k-1)} = (\lambda_{i}^{(k-1)})^{-1}$. For any fixed $(t,x)\in \mathbb{R}\times[\tilde{x}-\delta,L]$, we denote the inverse function of characteristics \eqref{characteritic} by $T_{2}^{(k)}(y;t,x)$, i.e.,
	\begin{equation}
		X_{2}^{(k)}\left(T_{2}^{(k)}(y;t,x);t,x\right) \equiv y.
	\end{equation}
	Obviously, it is well--defined by the implicit function theorem and driven by the ODE
	\begin{align}\label{characteriticT}
		\left\{
		\begin{aligned}
			&\frac{dT_{2}^{(k)}}{dy}(y;t,x)=\mu_{2}\left(y,\breve{\bm{\mathcal{R}}}^{(k-1)}\left(T_{2}^{(k)}(y;t,x),y\right)\right),\\
			&T_{2}^{(k)}(x;t,x)=t.
		\end{aligned}
		\right.
	\end{align}
	\par Given two points $(t_{1},x), (t_{2},x)$ in the region with $|t_{1}-t_{2}| \le \eta$, for any $y\in [\tilde{x}-\delta,L]$, one can integrate \eqref{characteriticT} from $x$ to $y$ and get
	\begin{align}
		&\left|T_{2}^{(k)}(y;t_{2},x) - T_{2}^{(k)}(y;t_{1},x)\right| \notag \\
		\le &|t_{2} - t_{1}| + |\int_{x}^{y} \mu_{2}\left(z,\breve{\bm{\mathcal{R}}}^{(k-1)}\left(T_{2}^{(k)}(z;t_{2},x),z\right)\right)- \mu_{2}\left(z,\breve{\bm{\mathcal{R}}}^{(k-1)}\left(T_{2}^{(k)}(z;t_{1},x),z\right)\right)dz| \notag \\
		\le & |t_{2} - t_{1}| + CC_{P}\epsilon|\int_{x}^{y} T_{2}^{(k)}(z;t_{2},x) - T_{2}^{(k)}(z;t_{1},x)dz|.
	\end{align}
	Combining with Gronwall's inequality, one has
	\begin{equation}\label{T1T2esitm}
		\left|T_{2}^{(k)}(y;t_{2},x) - T_{2}^{(k)}(y;t_{1},x)\right| \le (1+CC_{P}\epsilon)|t_{2} - t_{1}| \le (1+CC_{P}\epsilon)\eta, \quad \forall y\in [\tilde{x}-\delta,L].
	\end{equation}
	Recalling the exit--time $\tau_{2}^{(k)}(t,x)$, we denote exit--position by
	\begin{equation*}
		\xi_{2}^{(k)}(t,x) := X_{2}^{(k)}(\tau_{2}^{(k)};t,x),
	\end{equation*}
	and by \eqref{exitTime}, one has
	\begin{equation}\label{xiChi}
		\xi_{2}^{(k)}(t,x) = \sigma^{(k-1)}(T_{2}^{(k)}(\xi_{2}^{(k)};t,x)).
	\end{equation}
	Thus, one can see
	\begin{align}
		&\left|\xi_{2}^{(k)}(t_{1},x)- \xi_{2}^{(k)}(t_{2},x)\right| = \left|\sigma^{(k-1)}(T_{2}^{(k)}(\xi_{2}^{(k)}(t_{1},x);t_{1},x))-\sigma^{(k-1)}(T_{2}^{(k)}(\xi_{2}^{(k)}(t_{2},x);t_{2},x))\right| \notag \\
		\le & \|{\sigma^{(k-1)}}^{\prime}\|\left(|T_{2}^{(k)}(\xi_{2}^{(k)}(t_{1},x);t_{1},x)-T_{2}^{(k)}(\xi_{2}^{(k)}(t_{2},x);t_{1},x)|\right.\notag\\
		&\qquad\qquad\qquad\qquad\qquad\qquad \left.+|T_{2}^{(k)}(\xi_{2}^{(k)}(t_{2},x);t_{1},x)-T_{2}^{(k)}(\xi_{2}^{(k)}(t_{2},x);t_{2},x)|\right) \notag \\
		\le & \|{\sigma^{(k-1)}}^{\prime}\|\left(\mu_{\max}\left|\xi_{2}^{(k)}(t_{1},x)- \xi_{2}^{(k)}(t_{2},x)\right|+(1+CC_{P}\epsilon)\eta\right). \notag
	\end{align}
	Consequently, one has
	\begin{equation}\label{X1X2estim}
		\left|\xi_{2}^{(k)}(t_{1},x)- \xi_{2}^{(k)}(t_{2},x)\right| \le (1+CC_{\mathcal{F},1}\epsilon)C_{\mathcal{F},1}\epsilon\eta.
	\end{equation}
	Then, one can integrate \eqref{deriTrans} along $T_{2}^{(k)}(y;t_{1},x)$ and $T_{2}^{(k)}(y;t_{2},x)$ respectively, and get
	\begin{align}
		& D_{\omega}^{(k)}\breve{\mathcal{R}}_{2}^{(k)}(t_{2},x) - D_{\omega}^{(k)}\breve{\mathcal{R}}_{2}^{(k)}(t_{1},x) \notag \\
		=&D_{\omega}^{(k)}\breve{\mathcal{R}}_{2}^{(k)}(\tau_{2}^{(k)}(t_{2},x), \xi_{2}^{(k)}(t_{2},x)) -D_{\omega}^{(k)}\breve{\mathcal{R}}_{2}^{(k)}(\tau_{2}^{(k)}(t_{1},x), \xi_{2}^{(k)}(t_{1},x)) \notag \\
		&-\int_{\xi_{2}^{(k)}(t_{2},x)}^{x} D_{2}^{(k)}\lambda_{2}^{(k-1)}\partial_{x}\breve{\mathcal{R}}_{2}^{(k)} \Big|_{(T_{2}^{(k)}(y;t_{1},x),y)}^{(T_{2}^{(k)}(y;t_{2},x),y)}dy - \int_{\xi_{2}^{(k)}(t_{1},x)}^{\xi_{2}^{(k)}(t_{2},x)} D_{2}^{(k)}\lambda_{2}^{(k-1)}\partial_{x}\breve{\mathcal{R}}_{2}^{(k)} (T_{2}^{(k)}(y;t_{1},x),y)dy \notag \\
		& +\int_{\xi_{2}^{(k)}(t_{2},x)}^{x} \Lambda^{(k)} \Big|_{(T_{2}^{(k)}(y;t_{1},x),y)}^{(T_{2}^{(k)}(y;t_{2},x),y)}dy + \int_{\xi_{2}^{(k)}(t_{1},x)}^{\xi_{2}^{(k)}(t_{2},x)} \Lambda^{(k)} (T_{2}^{(k)}(y;t_{1},x),y)dy, \label{modulusInt}
	\end{align}
	where we use the notation for simplicity
	\begin{equation*}
		f\big|_{(t_{1},x_{1})}^{(t_{2},x_{2})} = f(t_{2},x_{2}) - f(t_{1},x_{1}),
	\end{equation*}
	for some function $f=f(t,x)$, and
	\begin{equation*}
		\Lambda^{(k)} :=  \kappa\alpha_{2}D_{\omega}^{(k)}\overline{E}^{(k-1)}+ \alpha_{2}D_{2}^{(k)}g_{2}.
	\end{equation*}
	By the definition \eqref{dOmega}, one can see that
	\begin{multline}\label{triangleIneq}
		\left|\partial_{t}\breve{\mathcal{R}}_{2}^{(k)}(t_{2},x) - \partial_{t}\breve{\mathcal{R}}_{2}^{(k)}(t_{1},x) \right| \le \left|D_{\omega}^{(k)}\breve{\mathcal{R}}_{2}^{(k)}(t_{2},x) - D_{\omega}^{(k)}\breve{\mathcal{R}}_{2}^{(k)}(t_{1},x)\right| \\
		+ \left|\omega_{2}^{(k)}(t_{2},x)\right|\left|\partial_{x}\breve{\mathcal{R}}_{2}^{(k)}(t_{2},x) - \partial_{x}\breve{\mathcal{R}}_{2}^{(k)}(t_{1},x) \right| + \left|\omega_{2}^{(k)}(t_{2},x) - \omega_{2}^{(k)}(t_{1},x)\right|\left|\partial_{x}\breve{\mathcal{R}}_{2}^{(k)}(t_{1},x)\right|.
	\end{multline}
	Here, combining \eqref{modulusDissipation}, \eqref{omegaC1}, \eqref{T1T2esitm} and \eqref{X1X2estim}--\eqref{triangleIneq}, as well as $C^{1}$ estimates \eqref{C1estima}, one can get that for large enough $C_{M,1}>0$,
	\begin{equation}\label{modulusEstiX}
		\varpi(\eta|\frac{\partial \breve{\mathcal{R}}_{2}^{(k)}}{\partial t}(\cdot,x)) \le \theta_{0}C_{M,1}\Phi(\eta) + C\epsilon\varpi(\eta|\frac{\partial \breve{\mathcal{R}}_{2}^{(k)}}{\partial x}(\cdot,x)),
	\end{equation}
	where $\theta_{0}$ is some constant satisfying $\alpha_{2}\theta_{d} < \theta_{0} <1$.
	\par Finally, using the equation \eqref{liearSys2} and noting the known estimates \eqref{C1estima} and \eqref{c66}, one can derive
	\begin{equation}\label{modulusXfromT}
		\varpi(\eta|\frac{\partial \breve{\mathcal{R}}_{2}^{(k)}}{\partial x}(\cdot,x))
		\leq \mu_{\max}\varpi(\eta|\frac{\partial \breve{\mathcal{R}}_{2}^{(k)}}{\partial t}(\cdot,x))+(C\epsilon+C\kappa)\Phi(\eta),
	\end{equation}
	which, together with \eqref{modulusEstiX}, completes the proof of \eqref{c63} for sufficiently large $C_{M,1}$, $C_{M,2}$.
	
	\item[Step 4. Uniform continuity modulus estimates] Finally, we turn to the proof of \eqref{c64}. We consider two points $(t_{1},x_{1})$, $(t_{2},x_{2})$ with $|t_{1}-t_{2}|\le \eta$ and $|x_{1}-x_{2}|\le \eta$ located on the same characteristic $T_{2}^{(k)}(y;t_{1},x_{1})$, namely,
	\begin{equation*}
		t_{2} = T_{2}^{(k)}(x_{2};t_{1},x_{1}).
	\end{equation*}
	For this case, one can integrate the equation \eqref{deriTrans} along the characteristic to get
	\begin{align*}
		D_{\omega}^{(k)}\breve{\mathcal{R}}_{2}^{(k)}(t_{2},x_{2}) - D_{\omega}^{(k)}\breve{\mathcal{R}}_{2}^{(k)}(t_{1},x_{1}) = \int_{x_{1}}^{x_{2}}\left(-D_{2}^{(k)}\lambda_{2}^{(k-1)}\partial_{x}\breve{\mathcal{R}}_{2}^{(k)}+\Lambda^{(k)}\right)(T_{2}^{(k)}(y;t_{1}, x_{1}),y)dy.
	\end{align*}
	And thus, with \eqref{pFpx}, \eqref{C1estima} and \eqref{c66}, one has
	\begin{equation}\label{sameCharacter}
		\left|D_{\omega}^{(k)}\breve{\mathcal{R}}_{2}^{(k)}(t_{2},x_{2}) - D_{\omega}^{(k)}\breve{\mathcal{R}}_{2}^{(k)}(t_{1},x_{1})\right| \le CC_{M}(\epsilon+\kappa)\Phi(\eta).
	\end{equation}
	\par For general two points $(t_{1},x_{1})$, $(t_{2},x_{2})$ with $|t_{1}-t_{2}|\le \eta$ and $|x_{1}-x_{2}|\le \eta$, one can find a point $(t_{3},x_{2})$ located on the characteristic $T_{2}^{(k)}(y;t_{1},x_{1})$. By \eqref{characteriticT}, one has
	\begin{equation*}
		\left|t_{3}-t_{1}\right| \le \mu_{\max}\left|x_{2}-x_{1}\right| \le \mu_{\max}\eta,
	\end{equation*}
	and thus
	\begin{equation}\label{muandone}
		\left|t_{3}-t_{2}\right| \le  \left(\mu_{\max}+1\right)\eta.
	\end{equation}
	Therefore, by \eqref{c63}, \eqref{sameCharacter} and \eqref{muandone}, one has
	\begin{align}
		&\left|D_{\omega}^{(k)}\breve{\mathcal{R}}_{2}^{(k)}(t_{2},x_{2}) - D_{\omega}^{(k)}\breve{\mathcal{R}}_{2}^{(k)}(t_{1},x_{1})\right| \notag\\
		\le & \left|D_{\omega}^{(k)}\breve{\mathcal{R}}_{2}^{(k)}(t_{2},x_{2}) - D_{\omega}^{(k)}\breve{\mathcal{R}}_{2}^{(k)}(t_{3},x_{2})\right| + \left|D_{\omega}^{(k)}\breve{\mathcal{R}}_{2}^{(k)}(t_{3},x_{2}) - D_{\omega}^{(k)}\breve{\mathcal{R}}_{2}^{(k)}(t_{1},x_{1})\right| \notag \\
		\le &\left(\mu_{\max}+2\right)C_{M,1}\Phi(\eta) + CC_{M}(\epsilon+\kappa)\Phi(\eta).
	\end{align}
	Using the similar argument as \eqref{triangleIneq} and \eqref{modulusXfromT}, one can conclude
	\begin{equation}
		\varpi(\eta|\frac{\partial \breve{\mathcal{R}}_{2}^{(k)}}{\partial t}(\cdot,\cdot)) \le \left(\mu_{\max}+3\right)C_{M,1}\Phi(\eta),\quad \varpi(\eta|\frac{\partial \breve{\mathcal{R}}_{2}^{(k)}}{\partial x}(\cdot,\cdot)) \le \mu_{\max}\left(\mu_{\max}+3\right)C_{M,1}\Phi(\eta).
	\end{equation}
	Choosing $C_{M} = \left(\mu_{\max}+2\right)^{2}C_{M,1}$, the proof of \eqref{c64} is completed. \qedhere
\end{description}
\end{proof}

\section{Dynamical Stability of the Time--periodic Transonic Shock Solution}\label{s4}
In this section, we prove our main \Cref{t2}. To begin with, it follows from the argument in \cite{Yuw} that the initial--boundary value problem~\eqref{a1}--\eqref{a5} with the assumptions \eqref{a8}--\eqref{AA11} and initial data \eqref{a19}--\eqref{a20} admits a semi--global unique transonic shock solution $(\rho, u, E)(t,x)$ with shock position $\sigma(t)$ satisfying
\begin{align}
\sum_{m=0}^{1}|\partial_{t}^{m}(\sigma(t)-\tilde{x})|+\|(\overline{\rho}_{-},\overline{u}_{-},\overline{E}_{-})(t,\cdot)
\|_{C^{1}([0,\sigma(t)))}
&+\|\breve{\bm{\mathcal{R}}}(t,\cdot)\|_{C^{1}((\sigma(t),L])}
<C\epsilon,\label{d1}
\end{align}
where $$\overline{\rho}_{-}(t,x)=\rho_{-}(t,x)-\tilde{\rho}_{-}(x),~\overline{u}_{-}(t,x)=u_{-}(t,x)-\tilde{u}_{-}(x),
~\overline{E}_{-}(t,x)=E_{-}(t,x)-\tilde{E}_{-}(x),$$
and $\breve{\bm{\mathcal{R}}} = \left(\breve{\mathcal{R}}_{1}, \breve{\mathcal{R}}_{2}\right)^{\top}$ is the solution to \eqref{sys57}--\eqref{system3} with boundary conditions \eqref{shockBoundary2}--\eqref{outletBoundary1}.
\par Moreover, we denote the upper bound for the time required for a wave to reach the boundary by
\begin{equation}\label{timeMax}
	\mathcal{T}_{0}=\mu_{\max}L,
\end{equation}
where $\mu_{\max}$ is defined in \eqref{mumax}. Then in the supersonic region, the exponential decay follows from the argument in~\cite{Yuan}, namely, for $t\in[N\mathcal{T}_{0},(N+1)\mathcal{T}_{0}],~N\in\mathbb{Z}_{+}$,
\begin{align}
	\|(\overline{\rho}_{-},\overline{u}_{-})(t,\cdot)-(\overline{\rho}_{-}^{(\mathcal{T})},\overline{u}_{-}^{(\mathcal{T})})(t,\cdot)\|
	_{C^{0}([0,\min\{\sigma(t),\sigma^{(\mathcal{T})}(t)\}))}\leq C_{L,0}\epsilon\zeta^{N}.\label{d0}
\end{align}
And together with \eqref{barEle}, it follows
\begin{align}
\|(\overline{\rho}_{-},\overline{u}_{-},\overline{E}_{-})(t,\cdot)-(\overline{\rho}_{-}^{(\mathcal{T})},\overline{u}_{-}^{(\mathcal{T})},
\overline{E}_{-}^{(\mathcal{T})})(t,\cdot)\|
_{C^{0}([0,\min\{\sigma(t),\sigma^{(\mathcal{T})}(t)\}))}\leq C_{L,1}\epsilon\zeta^{N}.\label{d2}
\end{align}
Here $C_{L,0}, C_{L,1}$ are positive constants.
\par For the problem in the subsonic region, we denote by 
\begin{equation*}
	\breve{\bm{\mathcal{R}}}^{(\mathcal{T})}(t,x) = \left(\breve{\mathcal{R}}_{1}^{(\mathcal{T})}, \breve{\mathcal{R}}_{2}^{(\mathcal{T})}\right)(t,x), \quad
	\breve{\bm{\mathcal{R}}}(t,x) = \left(\breve{\mathcal{R}}_{1}, \breve{\mathcal{R}}_{2}\right)(t,x),
\end{equation*}
the solutions to system \eqref{a1}--\eqref{a5} with the initial data \eqref{a14} and \eqref{a19} respectively in the corresponding regions
\begin{equation*}
	\Omega_{+}^{\mathcal{T}} = \left\{(t,x):t\in \mathbb{R}_{+}, \sigma^{(\mathcal{T})}(t)\le x \le L \right\}, \quad \Omega_{+} := \left\{(t,x):t\in \mathbb{R}_{+}, \sigma(t)\le x \le L \right\}.
\end{equation*}
In order to prove \eqref{a21}, with \eqref{d2} in hand, it remains to prove inductively that,
for some $t_{0}>0$ and $N \in \mathbb{Z}_{+}$, 
\begin{align}
	&\|\breve{\bm{\mathcal{R}}}(t,\cdot)-\breve{\bm{\mathcal{R}}}^{(\mathcal{T})}(t,\cdot)\| \le C_{S,1}\epsilon\zeta^{N+1}, \quad \forall t \in [t_{0}+\mathcal{T}_{0},t_{0}+2\mathcal{T}_{0}], \\
	&|\sigma(t)
	-\sigma^{(\mathcal{T})}(t)| \le C_{S,2}\epsilon\zeta^{N+1}, \quad |\sigma^{\prime}(t)
	-{\sigma^{(\mathcal{T})}}^{\prime}(t)| \le C_{S,2}\epsilon\zeta^{N+1}, \quad \forall t \in [t_{0}+\mathcal{T}_{0},t_{0}+2\mathcal{T}_{0}],
\end{align}
under the assumptions
\begin{align}
	&\|\breve{\bm{\mathcal{R}}}(t,\cdot)-\breve{\bm{\mathcal{R}}}^{(\mathcal{T})}(t,\cdot)\| \le C_{S,1}\epsilon\zeta^{N}, \quad \forall t \in [t_{0},t_{0}+\mathcal{T}_{0}], \\
	&|\sigma(t)
	-\sigma^{(\mathcal{T})}(t)| \le C_{S,2}\epsilon\zeta^{N}, \quad |\sigma^{\prime}(t)
	-{\sigma^{(\mathcal{T})}}^{\prime}(t)| \le C_{S,2}\epsilon\zeta^{N}, \quad \forall t \in [t_{0},t_{0}+\mathcal{T}_{0}].
\end{align}
To this end, we use a bootstrap argument as follows. Denote
\begin{align}
	&\Gamma_{1}(t)=\mathop{\max}\limits_{i=1,2}\mathop{\sup}\limits_{x\in[\max\{\sigma(t),\sigma^{(\mathcal{T})}(t)\},L]}
	|\breve{\mathcal{R}}_{i}(t,x)
	-\breve{\mathcal{R}}^{(\mathcal{T})}_{i}(t,x)|, \notag \\
	&\Gamma_{2}(t)=
	|\sigma(t)
	-\sigma^{(\mathcal{T})}(t)|, \qquad \Gamma_{3}(t)=
	|\sigma^{\prime}(t)
	-{\sigma^{(\mathcal{T})}}^{\prime}(t)|. \notag
\end{align}
By the regularity of $\breve{\bm{\mathcal{R}}}$ and $\sigma$, one can see that $\Gamma_{1}(t)$, $\Gamma_{2}(t)$ and $\Gamma_{3}(t)$ are continuous and satisfy
\begin{equation*}
	\Gamma_{1}(t_{0}+\mathcal{T}_{0}) \le C_{S,1}\epsilon\zeta^{N}, \quad \Gamma_{2}(t_{0}+\mathcal{T}_{0}) \le C_{S,2}\epsilon\zeta^{N}, \quad \Gamma_{3}(t_{0}+\mathcal{T}_{0}) \le C_{S,2}\epsilon\zeta^{N}.
\end{equation*}
Thus, it is sufficient to show  that for each given $t_* \in [t_{0}+\mathcal{T}_0, t_{0}+2\mathcal{T}_0]$, 
\begin{equation}\label{bootstrapGoal}
	\Gamma_{1}(t) \le C_{S,1}\epsilon\zeta^{N+1}, \quad \Gamma_{2}(t) \le C_{S,2}\epsilon\zeta^{N+1}, \quad \Gamma_{3}(t) \le C_{S,2}\epsilon\zeta^{N+1}, \quad \forall t\in [t_{0}+\mathcal{T}_{0}, t_{*}],
\end{equation}
under the assumptions
\begin{equation}\label{bootstrapAssum}
	\Gamma_{1}(t) \le \alpha C_{S,1}\epsilon\zeta^{N}, \quad \Gamma_{2}(t) \le  \alpha C_{S,2}\epsilon\zeta^{N}, \quad \Gamma_{3}(t) \le \alpha C_{S,2}\epsilon\zeta^{N}, \quad \forall t\in [t_{0}, t_{*}],
\end{equation}
for some constant $\alpha>1$ to be determined later.
\par Now, we use the equations \eqref{system3} for $\breve{\mathcal{R}}_{2}$ and $\breve{\mathcal{R}}_{2}^{(\mathcal{T})}$ respectively to get
\begin{align}
	&\left(\partial_{t}+\lambda_{2}\partial_{x}\right)\left(\breve{\mathcal{R}}_{2}- \breve{\mathcal{R}}_{2}^{(\mathcal{T})}\right) \notag \\
	=& - \left(\lambda_{2}-\lambda_{2}^{(\mathcal{T})}\right)\partial_{x}\breve{\mathcal{R}}_{2}^{(\mathcal{T})} +\alpha_{2}\left(g_{2}(x,\breve{\bm{\mathcal{R}}})-g_{2}(x,\breve{\bm{\mathcal{R}}}^{(\mathcal{T})})\right) \notag \\
	&+ \kappa\alpha_{2}\int_{\sigma^{(\mathcal{T})}(t)}^{\sigma(t)}\left(\rho_{l}(t, y) - \rho(y, \breve{\bm{\mathcal{R}}}^{(\mathcal{T})}(t,y))\right)dy + \kappa\alpha_{2}\int_{\sigma(t)}^{x}\left(\rho(y, \breve{\bm{\mathcal{R}}}(t,y)) - \rho(y, \breve{\bm{\mathcal{R}}}^{(\mathcal{T})}(t,y))\right)dy. \label{d10}
\end{align}
\par Next, one can define the characteristic as
\begin{align}\label{characteritic0}
	\left\{
	\begin{aligned}
		&\frac{dX_{2}}{d\tau}(\tau;t,x)=\lambda_{2}\left(X_{2}(\tau;t,x),\breve{\bm{\mathcal{R}}}\left(\tau,X_{2}(\tau;t,x)\right)\right),\\
		&X_{2}(t;t,x)=x,
	\end{aligned}
	\right.
\end{align}
for any fixed $(t,x) \in [t_{0}+\mathcal{T}_{0},t_{*}]\times[0,L]\cap\Omega_{+}^{\mathcal{T}}\cap\Omega_{+}$. Since the solution is $C^{1}$ smooth, the characteristic is well--defined. In the same way, one can define the characteristic $X_{2}^{(\mathcal{T})}$ for the time--periodic solution. In addition, one can define the backward exit--time for $(t,x)$ as respectively $\tau_{2}(t,x)$/$\tau_{2}^{(\mathcal{T})}(t,x)$ by the intersection of the characteristic $X_{2}$/$X_{2}^{(\mathcal{T})}$ and the shock $\sigma$/$\sigma^{(\mathcal{T})}$, i.e.,
\begin{equation}\label{exitTime0}
	X_{2}(\tau_{2};t,x)=\sigma(\tau_{2}), \quad X_{2}^{(\mathcal{T})}(\tau_{2}^{(\mathcal{T})};t,x)=\sigma^{(\mathcal{T})}(\tau_{2}^{(\mathcal{T})}).
\end{equation}
By \eqref{timeMax}, one has $\{\tau_{2}, \tau_{2}^{(\mathcal{T})}\} \subset [t_{0}, t_{*}]$.
\par Furthermore, one can integrate the ODE \eqref{shockposition} from $t_{0}$ to $t$ and use Hadamard's formula as well as \eqref{pFpx} to get
\begin{multline}\label{smallZeta}
	\left(1+2\overline{C}\kappa\mathcal{T}_{0}\right)\Gamma_{2}(t) \le \alpha C_{S,2}\epsilon\zeta^{N} 
	+2\mathcal{T}_{0}(1+C\epsilon+C\kappa)\left(\sum_{i=1}^{2}\dfrac{1}{\alpha_{i}}|\frac{\partial \mathcal{F}}{\partial \overline{\mathcal{R}}_{i}}(\tilde{x},\bm{0})|\Gamma_{1}(t)\right. \\
	+\left. \left(|\frac{\partial \mathcal{F}}{\partial \overline{\rho}_{l}}(\tilde{x},\bm{0})|
	+|\frac{\partial \mathcal{F}}{\partial \overline{u}_{l}}(\tilde{x},\bm{0})|\right)\left(C_{l}\epsilon\Gamma_{2}(t) + C_{L}\epsilon\zeta^{N}\right)\right).
\end{multline}
\par Then, for fixed $(t,x)$, one can integrate \eqref{d10} along the characteristic $X_{2}(\tau;t,x)$. Without loss of generality, assume that $X_{2}(\tau;t,x)$ intersects $\sigma$ (rather than $\sigma^{(\mathcal{T})}$) at the point $(\tau_{2}, \sigma(\tau_{2}))$. Applying the very similar argument as \eqref{c55}--\eqref{cauchy}, and further noting \eqref{d2}, one can conclude
\begin{equation}
	\Gamma_{1}(t) \le (1+C\epsilon +C\kappa )\alpha_{2}\theta_{d}\alpha C_{S,1}\epsilon\zeta^{N} +CC_{L}\epsilon\zeta^{N} + \alpha CC_{S,1}\epsilon^{2}\zeta^{N}, \quad \forall t\in [t_{0}+\mathcal{T}_{0},t_{*}].
\end{equation}
We choose $\zeta$ such that
\begin{equation*}
	(1+C\epsilon +C\kappa )\alpha_{2}\theta_{d}\alpha < \zeta < 1,
\end{equation*}
such that for large enough $C_{S,1}$, 
\begin{equation}\label{Gamma1}
	\Gamma_{1}(t) \le C_{S,1}\epsilon\zeta^{N+1}, \quad \forall t\in [t_{0}+\mathcal{T}_{0},t_{*}].
\end{equation}
The analysis for $\breve{\mathcal{R}}_{1}$ is similar. 
\par Moreover, we choose $\zeta$ and $\alpha$ such that
\begin{equation*}
	\alpha\left(1+2\overline{C}\kappa\mathcal{T}_{0}\right)^{-1} < \zeta <1,
\end{equation*}
and then the estimate for $\Gamma_{2}(t)$ in \eqref{bootstrapGoal} follows from \eqref{smallZeta}. Finally, the estimate for $\Gamma_{3}(t)$ in \eqref{bootstrapGoal} can be derived directly from \eqref{shockposition} and \eqref{Gamma1} by using Hadamard's formula. The proof of \Cref{t2} is now complete.

\section*{Acknowledgments}
Qu and Wang are supported in part by NSFC Grants No. 12431007 and 62588101.

\section*{Declarations}
\subsection*{Data availability statement}
No data was used for the research described in this article.

\subsection*{Conflict of interest}
The authors have no competing interests to declare that are relevant to the content of this article.

\small
\begin{spacing}{1}
	\bibliographystyle{abbrv}
	\bibliography{myShockBibs2}
\end{spacing}
\end{document}